 \newtheorem{thm}{Theorem}[section]
 \newtheorem{cor}[thm]{Corollary}
 \newtheorem{lem}[thm]{Lemma}
 \newtheorem{prop}[thm]{Proposition}
 \theoremstyle{definition}
 \newtheorem{defn}[thm]{Definition}
 \theoremstyle{remark}
 \newtheorem{rem}[thm]{Remark}
 \numberwithin{equation}{section}
\newcommand{\be}{\begin{equation}}
\newcommand{\ee}{\end{equation}}
\def\ds{\displaystyle}
\newcommand{\overbar}[1]{\mkern 1.5mu\overline{\mkern-1.5mu#1\mkern-1.5mu}\mkern 1.5mu}
\title{Theoretical study of  nonlinear  wave equations with combined power-type nonlinearities with  variable coefficients}
\date{}
\author[1,2]{M. Dimova\thanks{Corresponding author: M. Dimova, mdimova@unwe.bg}}
\author[2]{N. Kolkovska}
\author[2]{N. Kutev}
\affil[1]
{University of National and World Economy, Sofia, Bulgaria}
\affil[2]{
Institute of Mathematics and Informatics,
 Bulgarian Academy of Sciences, Sofia, Bulgaria}
\begin{document}
\maketitle

\begin{abstract}
In this paper, we study the initial boundary value problem for the nonlinear wave equation with
combined power-type nonlinearities with  variable coefficients. 
The global behavior of the solutions with non-positive and sub-critical energy  is completely investigated.  
The threshold  between the nonexistence of global in time weak solutions and non-blowing up solutions is found. 
For super-critical energy, two 
 new sufficient conditions guaranteeing  nonexistence of global in time solutions are given. One of them is proved for arbitrary sign of the scalar product of the initial data, while the other 
one is derived only for  positive sign.
Uniqueness and existence of local weak solutions are proved.

\textbf{Keywords:} Nonlinear wave equation, Potential well method, Nonexistence of global solutions, Non-blowing up solutions, Concavity method

\end{abstract}

\maketitle

\section{Introduction}\label{sec1} 
In this paper we consider the  following initial boundary value problem for the nonlinear wave equation  
\begin{align}
&u_{tt} -  \Delta u = f(x,u),  \quad  t>0,  \ \ x \in\Omega, \label{1}\\
&u(0,x)=u_0(x),~~ u_t(0,x)=u_1(x),  \quad x \in \Omega, \label{1a}\\
&u(t,x)=0, \quad  t\geq 0,  \ \ x\in\partial\Omega,\label{1b}
\end{align}
where  $\Omega$ is a bounded  open subset of $\textbf{R}^n$ $(n\geq 1)$ with smooth boundary $\partial\Omega$ and
\be\label{2}
u_0(x)\in \mathrm{H}_0^1(\Omega), \qquad u_1(x)\in \mathrm{L}^2(\Omega).
\ee
 
The nonlinearity $f(x,u)$ has one of the following forms
\begin{align*}
&(F1)&&f(x,u)=\sum_{i=1}^r a_i(x) |u|^{p_i-1}u + \sum_{j=1}^s b_j(x)|u|^{q_j-1}u,\\
&(F2)&&f(x,u)=a_1(x)|u|^{p_1} + \sum_{i=2}^r a_i(x) |u|^{p_i-1}u + \sum_{j=1}^s b_j(x)|u|^{q_j-1}u.
\end{align*}
Further on, we suppose that the exponents $p_i$, $i=1,...,r$, $q_j$, $j=1,...,s$ and the functions $a_i(x)$, $i=1,...,r$,  $b_j(x)$, $j=1,...,s$ satisfy the following conditions
\be\label{pq}
1<q_1<q_2<
\cdots <q_s<p_1<p_2<\cdots <p_r,
\ee
\be\label{pq-new}
p_r<\infty  \ \text{if} \ n=1,2; \quad
p_r< \frac{n+2}{n-2} \ \text{if} \ n\ge 3,
\ee
\begin{align}
&a_i(x)\in\mathrm{C}(\overbar{\Omega}), \ i=1,\ldots,r, \ \ \ \  b_j(x)\in\mathrm{C}(\overbar{\Omega}), \ j=1,...,s, \label{count}\\
&|a_i(x)|\leq A_i, \ i=1,...,r,\ \ \ \  |b_j(x)|\leq B_j, \ j=1,...,s, \ \ \forall x\in\overbar{\Omega}, \label{5} \\
&a_i(x)\geq 0, \ i=2,\ldots,r, \ \ \ \ b_j(x)\leq 0,  \ j=1,\ldots,s, ~~~\forall~ x\in\overbar{\Omega},\label{signs}\\
&a_1(x)  \ \ \text{is a sign-changing function in} \ \Omega \label{5n}.
\end{align}
The global behavior of the solutions to the wave equation   
\be\label{1k}
\begin{split}
&u_{tt} -  \Delta u = f(u),  \quad t>0, ~~x \in\Omega \subset \textbf{R}^n, ~~n\geq 1 \\
\end{split}
\ee
is subject to comprehensive studies in the last decades.
For negative or zero initial energy, 
the first results for finite time blow up of the solutions to \eqref{1k}, \eqref{1a}--\eqref{2} are obtained in \cite{Ball1,Ball2}, see also
 the earlier  papers \cite{Tsutsumi,Glassey}.
In a series of papers \cite{Levine-1,Levine-2,Levine-3} the author proves blow up of the solutions to abstract nonlinear equations by means of the so-called  concavity method of Levine.
The main idea of the concavity method of Levine  is the reduction of the blow up of the solutions of the wave equation to the blow up of the solutions to the following ordinary differential inequality
\be\label{2k}
\psi''(t)\psi(t) - \gamma \psi'^2(t) \geq 0,~~~~\gamma>1,~~~~ t\geq 0,
\ee
where 
$\psi(t)=\|u(t,\cdot)\|_{\mathrm{L}^2(\Omega)}$. 
Throughout the paper
we  use 
the following short notations for the functions  depending on $t$ and $x$
\begin{align*}
\|u(t)\|_p=\|u(t,\cdot)\|_{\mathrm{L}^p(\Omega)}, \ 0< p\leq \infty,  \quad 
(u(t),v(t))=\int_{\Omega} u(t,x)v(t,x) \, d x
\end{align*}
and for convenience we will write $\|\cdot\|$ instead of $\|\cdot\|_2$.
The proof of the blow up of the solution $\psi(t)$ to \eqref{2k} is based on the concavity of the auxiliary function $z(t)=\psi^{1-\gamma}(t)$.
When $\psi(0)>0$ and $\psi'(0)>0$, then  $z(t)\to 0$ for a finite time. Hence the solution $\psi(t)$ of \eqref{2k}, as well as the solution $u(t,x)$ of \eqref{1k}, blows up for a finite time.
Later on, the concavity method of Levine  is improved in \cite{Straughan}, where  
instead of \eqref{2k} the following more general ordinary differential inequality is considered
\be\label{4k}
\psi''(t)\psi(t) - \gamma \psi'^2(t) \geq -\beta \psi(t) ,~~~~\gamma>1,~\beta>0,~~~ t\geq 0.
\ee
Further generalizations of the concavity method are given in \cite{K-L,Korpusov-1,EJDE}.  In  cited papers the following new ordinary differential inequalities and equation are proposed:
\begin{align}
&\text{\cite{K-L}}&&\psi''(t)\psi(t) - \gamma \psi'^2(t) \geq -2\delta \psi(t)\psi'(t)-\mu \psi^2(t);~~~\label{5k} \\
&\text{\cite{Korpusov-1}}&&\psi''(t)\psi(t) - \gamma \psi'^2(t) \geq -2\delta \psi(t)\psi'(t)-\beta \psi(t);~~~ \label{6k} \\
&\text{\cite{EJDE}}&&\psi''(t)\psi(t) - \gamma \psi'^2(t) = \alpha \psi^2(t)-\beta \psi(t)+H(t),~~~ \label{7k}\\
&&&\gamma>1, ~~\delta\geq 0,~~ \mu\geq 0,~~\beta > 0,~~ \alpha>0,~~H(t)\geq 0,~~ t\geq 0\nonumber.
\end{align}
The finite time blow up of the solution $\psi(t)$ is proved under suitable conditions on the initial data $\psi(0)$ and $\psi'(0)$. 
Inequalities \eqref{2k} --\eqref{6k} and equation \eqref{7k} are applicable to a variety of nonlinear evolution equations
and give different  sufficient conditions on the initial data which guarantee the finite time blow up of the corresponding solutions, see e.g. \cite{Korpusov,Kalantarov,EJDE-2022,Avila-inequality}.

One of the powerful methods for the investigation of  the global behavior of the solutions to nonlinear wave equations is the  so-called  potential well method, introduced
in the pioneering  papers \cite{S,P-S}.
By means of this method,  finite time blow up or global existence of the solutions is completely studied for sub-critical initial energy, $0<E(0)<d$. 
The energy of the initial data $E(0)$ is defined in \eqref{8}, while 
the depth of the potential well $d$ is   given  in \eqref{13}.
In \cite{P-S,Liu} the authors prove that when the Nehari functional is positive $I(u_0)>0$ (see \eqref{I} for definition of $I(u)$), then problem \eqref{1k}, \eqref{1a}--\eqref{2} with 
\be \label{fff}
 f(u)=a |u|^{p-1}u, \quad a>0, ~~p>1
\ee
 has a global solution. If  $I(u_0)<0$, then  the local weak solution  blows up for a finite time. 
In the critical energy case, $E(0)=d$, the global existence holds when $I(u_0)\geq 0$. For $I(u_0)<0$ and $(u_0,u_1)\geq 0$ the solution blows up for a finite time, while for $I(u_0)<0$ and $(u_0,u_1)<0$ then either the solution blows up for a finite time or asymptotically tends to the ground state solution, see 
\cite{Avila1, Avila2,Xu-Quaterly}.
Later on, the results for problem \eqref{1k}, \eqref{1a}--\eqref{2} with  nonlinearity \eqref{fff} and sub-critical initial and critical energy are extended to more general combined power-type nonlinearities with constant coefficients in \cite{Xu-several,Xu-EJDE, Dai}, 
logarithmic nonlinearities \cite{Xu-log}, nonlinearities with variable exponents \cite{Antontsev,Piskin}, etc.
Another application of the potential well method is the study of the wave equations 
\eqref{1k} with   damping terms, see e.g. \cite{Korpusov,Kalantarova,Georgiev,Gazzola,Avila3}, hyperbolic problems with
Neumann's boundary conditions, Neumann-Dirichlet's boundary conditions, Robin's boundary conditions,  dynamical boundary conditions, see e.g. \cite{Kalantarov,Kalantarova,Vitillaro-1, Vitillaro-2} and the references therein.

For super-critical initial energy,  $E(0)>d$, there are only few sufficient conditions for finite time blow up.
In \cite{Straughan}, using \eqref{4k}, 
the finite time blow up of the solutions to abstract wave equations with nonlinearity \eqref{fff} is proved for positive energy under the conditions
\be\label{Straughan-e}
E(0)<\frac{1}{2} \frac{(u_0, u_1)^2}{\| u_0\|^2},  \quad (u_0, u_1)>0,  
\ee
see also \cite{Kalantarov}.
A different technique for proving finite time blow up is suggested by Gazzola and Squissina in \cite{Gazzola}.
For nonlinearity  \eqref{fff} the authors obtain the following sufficient conditions for finite time blow up
\be\label{Gazzola-e}
E(0)<\frac{\mathcal{C}(p-1)}{2(p+1)} \| u_0\|^2,    \quad (u_0, u_1)\geq 0, \quad I(u_0)<0.
\ee
Here $\mathcal{C}$ is the constant defined in \eqref{poin}.
Later on, blow up results under condition 
\eqref{Gazzola-e} are  extended to combined power-type nonlinearities  with constant coefficients, see \cite{Xu-EJDE,Xu-ANZIAM}.
In \cite{Xu-CPAA}, by employing
the adapted concavity method, the author  obtains  a finite time blow up result
of the solution for a class of nonlinear
wave equations with both strongly and weakly damped terms at super-critical
initial energy level. According to this result, the solution of problem \eqref{1k}, \eqref{1a}--\eqref{2}, \eqref{fff} blows up for a finite time if the initial data satisfy the  following conditions 
\be\label{Xu-e}
E(0)<\frac{\mathcal{C}(p_1-1)}{(1+\mathcal{C})(p_1+1)} (u_0, u_1),  \quad (u_0, u_1)>0, \quad I(u_0)<0. 
\ee
Let us also note the results in \cite{Avila-wave} for abstract wave equations with nonlinearities independent of the space and time variables and super-critical initial energy. In \cite{Avila-wave} the author proves finite time blow up of the solutions 
under general abstract sufficient conditions on the initial data. 

The aim of the present paper is the 
 investigation of equation \eqref{1} with more general nonlinear term $f(x,u)$ given by $(F1)$ and $(F2)$.
The coefficients of these nonlinearities depend on the space variables and one of them is a
 sign-changing function. 
We extend  the potential well method from power-type nonlinearities with constant coefficients to nonlinearities 
 $(F1)$ and $(F2)$
with variable coefficients. For this purpose the structure of the Nehari manifold and the properties of the Nehari functional  are studied in details. 
By means of the potential well method the problem for the global behavior of the
weak solutions with non-positive and sub-critical initial energy is completely solved. 
The threshold  between the nonexistence of global in time weak solutions and non-blowing up solutions, is found. 

For arbitrary super-critical initial energy, new sufficient conditions for nonexistence of global in time solutions are found. 
Let us note that one of them implies no restrictions on the sign of
the scalar 
product of the initial data. 
 We compare the sufficient conditions obtained in the paper with the previous ones and demonstrate that they are more general than \eqref{Straughan-e}, \eqref{Gazzola-e}, \eqref{Xu-e}. More precisely,  we construct initial data with arbitrary positive initial energy satisfying the new sufficient conditions, and for which  conditions \eqref{Straughan-e}, \eqref{Gazzola-e}, \eqref{Xu-e} are not fulfilled. Thus, we obtain a wider class of initial data for which the problem has no global in time solutions. 
Finally, uniqueness and existence of local weak solutions to problem \eqref{1}--\eqref{2} is proved for a nonlinearity which is more general than $(F1)$ and $(F2)$.

Recently,  special cases of  problem  \eqref{1}--\eqref{2} is studied in \cite{NTADES2022}. For nonlinearities with  two power-type terms and variable coefficients, only  nonexistence of global in time solutions is proved.

The paper is structured as follows. Some definitions and  preliminary results are recalled in Section~2. Section~3  provides a  precise  analysis of the structure of the Nehari manifold. In Section~4, global existence and nonexistence of global in time solutions  with non-positive or positive sub-critical energy are proved by means of the potential well method. 
Section~5 deals with the case of super-critical initial energy. 
 New sufficient conditions for nonexistence of global weak solutions are obtained and compared with the previous ones. 
The local existence and uniqueness result is formulated and proved in Appendix A.
\section{Preliminary}
Below, we recall some definitions.
\begin{defn}
The function $u(t,x)$ is a weak solution of \eqref{1}--\eqref{2} with nonlinearity $(F1)$ or $(F2)$ in $[0,T_m)\times \Omega$ if 
$$
u(t,x)\in  \mathrm{C}((0,T_m); \mathrm{H}_0^1(\Omega)) \cap \mathrm{C}^1((0,T_m); \mathrm{L}^2(\Omega))
\cap \mathrm{C}^2((0,T_m); \mathrm{H}^{-1}(\Omega)) 
$$
and the identity
$$
\int_{\Omega} u_t(t,x)\eta(x) \,d x +\int_0^t \int_{\Omega} \nabla u(\tau,x) \nabla\eta(x)\, d x \, d \tau =
\int_0^t \int_{\Omega} f(x,u) \eta(x)\, dx \, d \tau +\int_{\Omega} u_1(x)\eta(x) \,d x
$$
holds for every $\eta(x)\in \mathrm{H}_0^1(\Omega)$ and every $t\in [0,T_m)$. 
\end{defn}

\begin{defn}
Suppose  $u(t,x)$ is a  weak solution of \eqref{1}--\eqref{2} with nonlinearity $(F1)$ or $(F2)$ in the maximal existence time interval $[0,T_m)$, $0<T_m\leq \infty$. Then solution  $u(t,x)$
blows up at $T_m$ if
$$
\limsup_{t\to T_m, t<T_m} \|u(t)\| =\infty.
$$
\end{defn}
 
The proof of finite time blow up of the solution to \eqref{1}--\eqref{2} with nonlinearity $(F1)$ and $(F2)$ is based on  the  concavity method of Levine \cite{Levine-1} and its modifications, proposed in \cite{EJDE,ERA}.
The main idea of this method is to replace the
investigation of the solution $u(t,x)$ with the study of the non-negative smooth function $\psi(t)=\|u(t)\|^2$.
We recall some preliminary results, which give sufficient conditions for finite time blow up of the solution of some ordinary differential equations, see  Theorem~2.3 in \cite{EJDE} and Theorem~3.2 in \cite{ERA}. 
\begin{defn}
We say that a non-negative function $\psi(t) \in \mathrm{C}^2([0,T_m))$, $0<T_m\leq\infty$ blows up at $T_m$ if
$$\limsup_{t \rightarrow T_m, t<T_m} \psi(t)= \infty.
$$
\end{defn}

\begin{lem}\label{lm1}(Theorem~3.2 in \cite{ERA})
Suppose $\psi(t) \in \mathrm{C}^2([0,T_m))$ is a nonnegative solution to  the problem
\be\label{11}
\begin{aligned}
&\psi''(t) \psi(t) - \gamma \psi'^2(t) =Q(t), \quad t\in [0,T_m), \quad 0 < T_m \leq \infty,\\
&\gamma >1, \ Q(t) \in \mathrm{C}([0,T_m)),  \ Q(t) \geq 0 \ \text{for}  \ t\in [0,T_m).
\end{aligned}
\ee
If  $\psi(t)$ blows up at $T_m$,  then $T_m < \infty$.
\end{lem}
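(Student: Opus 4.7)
The plan is to apply Levine's concavity trick. Assume for contradiction that $T_m = \infty$ and the conclusion fails. Since $\psi$ blows up at $T_m = \infty$, there is a sequence $t_n \to \infty$ with $\psi(t_n) \to \infty$. On the open set $\{t : \psi(t) > 0\}$ define the auxiliary function $z(t) = \psi^{1-\gamma}(t)$; note that $z(t_n) \to 0$ along the sequence above, once $t_n$ is large enough that $\psi(t_n) > 0$.

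A direct differentiation, using \eqref{11}, gives
\[
z''(t) = (1-\gamma)\psi^{-\gamma-1}(t)\bigl[\psi(t)\psi''(t) - \gamma \psi'^2(t)\bigr] = (1-\gamma)\psi^{-\gamma-1}(t)\,Q(t),
\]
which is nonpositive because $\gamma > 1$, $Q(t) \geq 0$ and $\psi(t) > 0$. Hence $z$ is concave wherever defined. Moreover, on the same set $\{\psi > 0\}$ the equation rearranges to $\psi''(t) = (\gamma \psi'^2(t) + Q(t))/\psi(t) \geq 0$, so $\psi$ is convex on every connected component of $\{\psi > 0\}$. A standard argument then shows that the component of $t_n$ (for $n$ large) must be an unbounded half-line $[T, \infty)$: otherwise the component would be a bounded open interval $(\alpha,\beta)$ with $\alpha > 0$ and $\psi(\alpha) = \psi(\beta) = 0$, on which a convex nonnegative function is forced to vanish identically, a contradiction.

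On $[T, \infty)$ the function $z$ is nonnegative, concave, and satisfies $\liminf_{t \to \infty} z(t) = 0$. Concavity makes the right derivative nonincreasing, and if this derivative were ever strictly negative at some $s_0$, then $z(t) \leq z(s_0) + z'_+(s_0)(t-s_0)$ would drop below zero in finite time, violating $z \geq 0$. Therefore $z$ is nondecreasing on $[T, \infty)$, so $z(t) \geq z(T) > 0$ for every $t \geq T$, which contradicts $z(t_n) \to 0$. This contradiction yields $T_m < \infty$.

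The main obstacle is the \emph{eventually positive} step: the lemma only guarantees $\psi \geq 0$, so it is genuinely possible in principle for $\psi$ to touch zero, and $z$ is defined only where $\psi > 0$. The convexity of $\psi$ on $\{\psi > 0\}$ is what confines any such degeneracy to a bounded initial segment and thereby allows the concavity argument to run on an unbounded tail. Once that is in place, the rest is an elementary consequence of the fact that a concave, nonnegative function on a half-line is nondecreasing.
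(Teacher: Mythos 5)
The paper does not prove this lemma itself; it is imported verbatim from Theorem~3.2 of \cite{ERA}, whose proof is the standard Levine concavity argument via the auxiliary function $z=\psi^{1-\gamma}$ --- exactly the route you take (concavity of $z$ from $z''=(1-\gamma)\psi^{-\gamma-1}Q\le 0$, then a nonnegative concave function on a half-line is nondecreasing, contradicting $z(t_n)\to 0$). Your write-up is correct, and your extra care with the set $\{\psi>0\}$ --- using convexity of $\psi$ there to exclude interior components of the zero set so that the concavity argument runs on an unbounded tail --- tightens a point that is usually glossed over; I see no gaps.
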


\begin{lem}\label{lm2}(Theorem~2.3 in \cite{EJDE})
Suppose  $\psi(t)\in  \mathrm{C}^2([0,T_m))$ is a nonnegative solution of the problem
\be\label{12}
\begin{aligned}
&\psi''(t) \psi(t) - \gamma \psi'^2(t) = \alpha\psi^2(t)-\beta\psi(t)+H(t), ~~~~~t\in [0,T_m)~~0 < T_m \leq \infty, \\
&\gamma>1, ~\alpha>0, ~\beta>0, ~~H(t)\in  \mathrm{C}([0,T_m)), ~~H(t)\geq 0~~\text{for}~~t\in [0, T_m).
\end{aligned}
\ee
If  $\psi(t)$ blows up at $T_m$,  then $T_m < \infty$.
\end{lem}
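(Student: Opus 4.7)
The plan is a proof by contradiction based on the Levine concavity substitution, with a preliminary trapping step that absorbs the lower-order terms $-\beta\psi(t)+H(t)$ into a pure Levine-type inequality. Suppose for contradiction that $T_m=\infty$ and $\limsup_{t\to\infty}\psi(t)=\infty$.

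First I would prove a trapping property: if $\psi(t^\ast)>\beta/\alpha$ and $\psi'(t^\ast)\ge 0$, then $\psi'(t)\ge 0$ and $\psi(t)\ge\psi(t^\ast)$ for all $t\in[t^\ast,\infty)$. Indeed, at any later point where $\psi'$ were to vanish one would still have $\psi>\beta/\alpha$, and the equation combined with $H\ge 0$ yields
\begin{equation*}
\psi''(t)\psi(t)=\alpha\psi^2(t)-\beta\psi(t)+H(t)>0,
\end{equation*}
so $\psi''>0$ there, preventing $\psi'$ from switching from non-negative to negative. To produce such a $t^\ast$, I would fix any $M>\beta/\alpha$, pick $t_n\to\infty$ with $\psi(t_n)\to\infty$, and observe that the maximum of $\psi$ on $[0,t_n]$ cannot occur at an interior point (by the same ODE computation, an interior maximum at height $>\beta/\alpha$ is impossible). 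Hence this maximum sits at $t_n$ itself for $n$ large, which forces $\psi'(t_n)\ge 0$ and $\psi(t_n)>M$; set $t^\ast=t_n$.

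On $[t^\ast,\infty)$ the trap gives $\psi\ge M$, so with $\alpha'=\alpha-\beta/M>0$ the equation upgrades to the pure Levine inequality
\begin{equation*}
\psi''(t)\psi(t)-\gamma\psi'^2(t)\ge\alpha'\psi^2(t).
\end{equation*}
Let $z(t)=\psi(t)^{1-\gamma}$. Since $\psi'\ge 0$ and $1-\gamma<0$, one has $z>0$ and $z'\le 0$, while a direct computation yields
\begin{equation*}
z''(t)=(1-\gamma)\psi^{-\gamma-1}\bigl(\psi''\psi-\gamma\psi'^2\bigr)\le -(\gamma-1)\alpha'\,z(t)<0,
\end{equation*}
so $z$ is strictly concave on $[t^\ast,\infty)$. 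After possibly replacing $t^\ast$ by a slightly larger time to guarantee $\psi'(t^\ast)>0$ (hence $z'(t^\ast)<0$), the tangent-line bound $z(t)\le z(t^\ast)+z'(t^\ast)(t-t^\ast)$ forces $z$ to reach $0$ at the finite time $t^\ast-z(t^\ast)/z'(t^\ast)$, which means $\psi$ blows up at that finite time, contradicting $T_m=\infty$.

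The main obstacle is the trapping step: the hypothesis "$\psi$ blows up at $T_m$" is only a $\limsup$ statement, so $\psi$ could a priori oscillate wildly. Promoting this to eventual monotonicity of $\psi$ above a threshold on which $\alpha\psi^2-\beta\psi\ge\alpha'\psi^2$ is what reduces the problem to a classical Levine configuration; the concavity step afterwards, already familiar from Lemma~\ref{lm1}, is then essentially standard.
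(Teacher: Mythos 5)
Your proof is correct, but note that there is nothing in the paper to compare it against: Lemma~\ref{lm2} is imported verbatim as Theorem~2.3 of \cite{EJDE} and the paper gives no proof of it, so what you have written is a self-contained replacement rather than a variant of an in-paper argument. All three ingredients check out. The trapping step is sound: at the first later zero of $\psi'$ after a time with $\psi>\beta/\alpha$ and $\psi'\geq 0$ one still has $\psi>\beta/\alpha$, hence $\psi''\psi=\alpha\psi^2-\beta\psi+H\geq \psi(\alpha\psi-\beta)>0$, which forbids the sign change; and the interior-maximum exclusion correctly upgrades the bare $\limsup$ hypothesis of the lemma to the existence of $t^\ast$ with $\psi(t^\ast)>M>\beta/\alpha$ and $\psi'(t^\ast)\geq 0$ (the maximum of $\psi$ over $[0,t_n]$ exceeds $\psi(0)$ and $\beta/\alpha$ for large $n$, so it can sit neither at $0$ nor in the interior). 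Once $\psi\geq M$ the absorption $\alpha\psi^2-\beta\psi\geq(\alpha-\beta/M)\psi^2$ reduces \eqref{12} to the classical Levine inequality, and since $z''\leq -(\gamma-1)\alpha' z<0$ makes $z'$ strictly decreasing, your replacement of $t^\ast$ by a slightly later time to get $z'<0$ is legitimate. One cosmetic suggestion: at the end it is cleaner to state the contradiction as ``$z=\psi^{1-\gamma}$ is strictly positive on all of $[t^\ast,\infty)$ because $\psi$ is finite there, yet the tangent-line bound forces $z\leq 0$ at the finite time $t^\ast-z(t^\ast)/z'(t^\ast)$,'' rather than saying that $\psi$ blows up at that time; the content is the same.
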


Let us introduce some definitions which are crucial for the extension of the potential well method from 
nonlinearity with constant coefficients to one with variable coefficients.
For every $z\in\mathrm{H}^1_0(\Omega)$ and nonlinearity $f(x,z)$, given by $(F1)$ or $(F2)$, we define
the
potential energy functional $J(z)$ 
$$
J(z):=\frac{1}{2}\|\nabla z\|^2  - \int_{\Omega} \int_{0}^{z(x)} f(x,w) \, dw \, d x,
$$
the Nehari functional $I(z)$
\be\label{I}
I(z):=\|\nabla z\|^2 -  \int_{\Omega} z f(x,z) \, d x,
\ee
the Nehari manifold  $\mathcal{N}$ 
$$
\mathcal{N}=\{ z\in
 \mathrm{H}^1_0(\Omega): \  I(z)=0,  \   ~~~\|\nabla z\| \neq 0\},
$$
and the critical energy constant $d$ (the depth of the potential well) 
\be\label{13}
d=\inf_{z\in \mathcal{N}} J(z).
\ee

From the definitions of $J(z)$ and $I(z)$, and the assumptions \eqref{pq} and \eqref{signs}, we get the following important relation:
\be\label{IJ}
J(z)=\frac{1}{p_1+1}I(z)+\frac{p_1-1}{2(p_1+1)}\|\nabla z\|^2 + B(z),  
\ee
where
$$
\begin{aligned}
B(z)= &\sum_{i=2}^r \frac{p_i-p_1}{(p_1+1)(p_i+1)} \int_{\Omega} a_i(x) |z(x)|^{p_i+1} \, d x  \\ &+
\sum_{j=1}^s \frac{q_j-p_1}{(p_1+1)(q_j+1)} \int_{\Omega} b_j(x) |z(x)|^{q_j+1} \, d x \geq 0.
\end{aligned}
$$
When the functionals $I$, $J$ and $B$ are evaluated on functions depending on $t$ and $x$, then we use the short notations
$I(u(t))=I(u(t,\cdot))$, $J(u(t))=J(u(t,\cdot))$, $B(u(t))=B(u(t,\cdot)).$

In what follows, 
 we will utilize the following embedding theorem for $z \in \mathrm{H}_0^1(\Omega)$:
\be\label{Sob}
\|z\|_q \leq C_q \| \nabla z\|, ~~~ \text{where}~~ 
\left\{\begin{array}{lll}
&1\leq q \leq \infty & \text{if } n=1;\\
&1\leq q <\infty & \text{if } n=2;\\
&1\leq q\leq \frac{2n}{n-2}& \text{if } n\geq 3.
\end{array}
\right.
\ee

\section{Structure of the Nehari manifold}
The Nehari manifold $\mathcal{N}$ and the Nehari functional $I$ play a key role in the application of the potential well method.
In this section we  describe in detail the structure of the Nehari manifold and the relations between functionals $I$, $J$ and the critical energy constant $d$.

\begin{lem}\label{lm3}
Suppose  nonlinearity $f(x,z)$ satisfies $(F1)$,  $z\in\mathrm{H}_0^1(\Omega)$ and $\|\nabla z\|\neq 0$. If
either
\be\label{14}
\sum_{i=2}^{r}\int_{\Omega} a_i(x)\ |z(x)|^{p_i+1} \, dx>0~~~~~~~~~~~~~~~
\ee
or
\be\label{15}
\sum_{i=2}^{r}\int_{\Omega} a_i(x)\ |z(x)|^{p_i+1} \, dx=0 ~~\text{and}~~\int_{\Omega} a_1(x)\ |z(x)|^{p_1+1} \, dx > 0
\ee
holds, then  there exist exactly two constants $\lambda^\ast>0$ and $-\lambda^\ast<0$ such that $\pm\lambda^\ast z \in \mathcal{N}$. 
\end{lem}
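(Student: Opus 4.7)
The plan is to reduce the statement to a one-variable monotonicity analysis of $\lambda \mapsto I(\lambda z)$ and to exploit even symmetry in $\lambda$ to get both roots at once. Under $(F1)$ the integrand $u f(x,u)$ is even in $u$, so $I(\lambda z) = I(-\lambda z)$ for all $\lambda \in \mathbb{R}$, and it suffices to exhibit a unique positive root $\lambda^{\ast}$. Since $\|\nabla(\pm\lambda^{\ast} z)\| = \lambda^{\ast}\|\nabla z\| \neq 0$, both $\pm\lambda^{\ast} z$ then automatically lie in $\mathcal{N}$.

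Setting $\alpha_i := \int_{\Omega} a_i(x)|z|^{p_i+1}\,dx$ and $\beta_j := \int_{\Omega} b_j(x)|z|^{q_j+1}\,dx$, the sign hypotheses in \eqref{signs} give $\alpha_i \geq 0$ for $i \geq 2$ and $\beta_j \leq 0$ for all $j$, and a direct computation yields
\begin{equation*}
I(\lambda z) = \lambda^{2}\|\nabla z\|^{2} - \sum_{i=1}^{r}\lambda^{p_i+1}\alpha_i - \sum_{j=1}^{s}\lambda^{q_j+1}\beta_j \quad \text{for } \lambda>0.
\end{equation*}
The key idea is to study the rescaled function $\tilde I(\lambda) := \lambda^{-(p_1+1)} I(\lambda z)$, whose positive zeros coincide with those of $\lambda \mapsto I(\lambda z)$. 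After rescaling, the exponent on $\|\nabla z\|^{2}$ is $1-p_1<0$, the exponents on $\beta_j$ are $q_j-p_1<0$, the exponents on $\alpha_i$ for $i\geq 2$ are $p_i-p_1>0$, and the $\alpha_1$ term is a constant (using \eqref{pq}). Term-by-term differentiation then shows each summand in $\tilde I'(\lambda)$ has a definite sign: the $\|\nabla z\|^{2}$ summand is strictly negative (because $\|\nabla z\|\neq 0$), the $\alpha_1$ piece vanishes, and the $\beta_j$ and the $\alpha_i$ ($i\geq 2$) summands are non-positive. Consequently $\tilde I$ is strictly decreasing on $(0,\infty)$.

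To close the proof, I examine the limits of $\tilde I$ at the endpoints. As $\lambda\to 0^{+}$, the singular term $\lambda^{1-p_1}\|\nabla z\|^{2}$ drives $\tilde I(\lambda)\to +\infty$. As $\lambda\to\infty$: under \eqref{14} some $\alpha_i>0$ with $i\geq 2$, and then $-\lambda^{p_i-p_1}\alpha_i\to -\infty$ forces $\tilde I(\lambda)\to -\infty$; under \eqref{15} all $\alpha_i$ with $i\geq 2$ vanish and the only surviving contribution is the constant $-\alpha_1<0$. Either way, continuity and strict monotonicity produce a unique $\lambda^{\ast}\in(0,\infty)$ with $\tilde I(\lambda^{\ast})=0$, and the symmetry from the first paragraph gives the matching negative root.

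The main difficulty is uniqueness. Existence alone would follow from a bare sign change and the intermediate value theorem, but uniqueness demands monotonicity, and the particular rescaling exponent $p_1+1$ is chosen precisely so that the sign-changing coefficient $a_1$ contributes only a constant to $\tilde I$, leaving the three remaining families of terms with definite signs by \eqref{signs} and \eqref{pq}. Hypotheses \eqref{14} and \eqref{15} enter the argument solely in the $\lambda\to\infty$ limit; without either, $\tilde I$ could remain strictly positive for all $\lambda>0$ and no root would exist.
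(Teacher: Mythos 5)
Your proof is correct and follows essentially the same strategy as the paper: reduce to a one-variable problem in $\lambda$, rescale by the power of $\lambda$ that turns the sign-changing $a_1$-term into a constant, and conclude by strict monotonicity plus the endpoint limits, with evenness in $\lambda$ supplying the negative root. The one genuine difference is organizational and works in your favor: the paper separates the equation as $\|\nabla z\|^2=h(\lambda)$ and must split into cases according to whether $\sum_j\int_\Omega b_j|z|^{q_j+1}\,dx$ vanishes and according to the sign of $\int_\Omega a_1|z|^{p_1+1}\,dx$, whereas your single function $\tilde I(\lambda)=\lambda^{-(p_1+1)}I(\lambda z)$ keeps the term $\lambda^{1-p_1}\|\nabla z\|^2$ inside the monotone quantity, so strict decrease holds unconditionally (since $\|\nabla z\|\neq 0$) and hypotheses \eqref{14}--\eqref{15} are needed only to make the limit at $+\infty$ negative. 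This removes the paper's case analysis entirely; all steps (signs of the derivatives of each summand, the limits at $0^+$ and $+\infty$, and the even symmetry $I(\lambda z)=I(-\lambda z)$ under $(F1)$) check out against \eqref{pq} and \eqref{signs}.
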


\begin{proof}
From \eqref{I} we get
\begin{align*}
I(\lambda z)=&\lambda^2\left(\| \nabla z\|^2 -|\lambda|^{p_1-1}\int_{\Omega} a_1(x) |z(x)|^{p_1+1} \, d x- \sum_{i=2}^r |\lambda|^{p_i-1}\int_{\Omega} a_i(x) |z(x)|^{p_i+1} \, d x \right.\\  -
&\left. \sum_{j=1}^s |\lambda|^{q_j-1}\int_{\Omega} b_j(x) |z(x)|^{q_j+1}\right) \, d x.
\end{align*}
For $\lambda\ne 0$, the equality $I(\lambda z)=0$ is equivalent  to 
\begin{align*}
\| \nabla z\|^2=&|\lambda|^{p_1-1}\int_{\Omega} a_1(x) |z(x)|^{p_1+1} \, d x+\sum_{i=2}^r |\lambda|^{p_i-1}\int_{\Omega} a_i(x) |z(x)|^{p_i+1} \, d x \\  
&+\sum_{j=1}^s |\lambda|^{q_j-1}\int_{\Omega} b_j(x) |z(x)|^{q_j+1} \, dx:=h(\lambda). 
\end{align*}
Since $h(\lambda)$ is an even function in $\lambda$,
we study the behavior of $h(\lambda)$ for $\lambda > 0$.

First, we consider the case:\\
(A)~~~~~~$\ds \sum_{j=1}^s\int_{\Omega} b_j(x) |z(x)|^{q_j+1} \, dx < 0.$\\
Let us represent the function $h(\lambda)$ as
$
h(\lambda)=\lambda^{p_1-1}{h_1}(\lambda),
$ 
where
$$h_1(\lambda)=\int_{\Omega} a_1(x) |z(x)|^{p_1+1} \, d x+ \sum_{i=2}^r \lambda^{p_i-p_1}\int_{\Omega} a_i(x) |z(x)|^{p_i+1} \, d x   
+\sum_{j=1}^s \lambda^{q_j-p_1}\int_{\Omega} b_j(x) |z(x)|^{q_j+1}\, d x.
$$
Under the assumptions of Lemma~\ref{lm3}, we get $h_1'(\lambda)>0,$ $\forall \lambda>0$. Therefore, the function  $h_1(\lambda)$ is  strictly increasing  for $\lambda\in(0,\infty)$.
Moreover,  
$$\lim_{\lambda\to 0^+}h_1(\lambda)=-\infty, 
$$
$$
\lim_{\lambda\to +\infty}h_1(\lambda)=\begin{cases}
+\infty & \text{if }  \ds\sum_{i=2}^r\int_{\Omega}a_i(x) |z(x)|^{p_i+1} \, dx >0; \\[4mm]
 \ds\int_{\Omega}a_1(x) |z(x)|^{p_1+1} \, dx >0 & \text{if } \ds \sum_{i=2}^r\int_{\Omega}a_i(x) |z(x)|^{p_i+1} \, dx =0.
 \end{cases}
$$
Hence, there exists a unique constant $\lambda_0>0$ such that $h_1(\lambda_0)=0$, and consequently, $h(\lambda_0)=0$, $h(\lambda)<0$ for $\lambda\in (0,\lambda_0)$ and $h(\lambda)>0$ for $\lambda\in (\lambda_0,\infty)$. Since $h(\lambda)$ is strictly increasing in $(\lambda_0,\infty)$ and $\lim_{\lambda\to +\infty}h(\lambda)=+\infty$, we get that there exists a unique number  $\lambda^\ast>\lambda_0$ for which $h(\lambda^\ast)=\| \nabla z\|^2$, i.e. $I(\lambda^\ast z)=0$ and  $\lambda^\ast z \in \mathcal{N}$.

Now, we consider the case\\
(B)~~~~~~$\ds\sum_{j=1}^s\int_{\Omega} b_j(x) |z(x)|^{q_j+1} \, dx = 0.$ \\
If condition \eqref{14} with $\int_{\Omega} a_1(x)\ |z(x)|^{p_1+1} \, dx \geq 0$ or condition \eqref{15} holds, 
then function $h(\lambda)$ is a increasing function for $\lambda\in(0,+\infty)$. From $\lim_{\lambda\to 0^+}h(0)=0$ and $\lim_{\lambda\to+\infty}h(\lambda)=+\infty$, we conclude that there exists a unique constant  $\lambda^\ast>0$, such that $h(\lambda^\ast)=\| \nabla z\|^2$, i.e. $\lambda^\ast z \in \mathcal{N}$.

Finally, when condition \eqref{14} with  $\int_{\Omega} a_1(x)\ |z(x)|^{p_1+1} \, dx <0 $ is satisfied,
we investigate the behavior of $h(\lambda)$ analogously to case (A).
Obviously, 
$h_1(\lambda)$ is a increasing function for $\lambda\in(0,\infty)$ and 
$$\lim_{\lambda\to 0^+}h_1(\lambda)=\int_{\Omega}a_1(x) |z(x)|^{p_1+1} \, dx <0,  \ \ \
\lim_{\lambda\to +\infty}h_1(\lambda)=+\infty.
$$
Applying  the same arguments  as in the proof of case (A), we obtain that there exists a unique $\lambda^\ast>0$ such that 
$\lambda^\ast z \in \mathcal{N}$. 
Lemma~\ref{lm3} is proved.

\end{proof}

\begin{lem}\label{lm3n}
Suppose nonlinearity $f(x,z)$ satisfies $(F2)$, $z\in\mathrm{H}_0^1(\Omega)$ and $\|\nabla z\|\neq 0$. 

(i) If
\be\label{14n}
\sum_{i=2}^{r}\int_{\Omega} a_i(x)\ |z(x)|^{p_i+1} \, dx>0,~~~~~~~~~~~~~~~
\ee
then there exist exactly two constants $\lambda^\ast_1>0$ and $\lambda^\ast_2<0$ such that $\lambda^\ast_i z \in \mathcal{N}$, $i=1,2$.

(ii) If
\be\label{15n}
\sum_{i=2}^{r}\int_{\Omega} a_i(x)\ |z(x)|^{p_i+1} \, dx=0 ~~\text{and}~~\int_{\Omega} a_1(x)\ |z(x)|^{p_1} z(x) \, dx \ne 0,
\ee
then there exists a unique constant $\lambda^\ast$ such that $\lambda^\ast z \in \mathcal{N}$.
Moreover, $\lambda^\ast$ is positive if $\int_{\Omega} a_1(x)\ |z(x)|^{p_1}z(x) \, dx > 0$ and  $\lambda^\ast$ is negative
if $\int_{\Omega} a_1(x)\ |z(x)|^{p_1} z(x) \, dx < 0$. 
\end{lem}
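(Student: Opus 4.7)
The plan is to adapt the argument for Lemma~\ref{lm3} to nonlinearity $(F2)$; the essential new feature is that the term $a_1(x)|u|^{p_1}$ is not odd in $u$, so the scaling function used to solve $I(\lambda z)=0$ will no longer be even in $\lambda$, and the two half-lines $\lambda>0$, $\lambda<0$ must be treated separately. This asymmetry is exactly what produces the split between parts (i) and (ii): in (i) the higher-order positive $a_i$ terms dominate at both ends and yield one root on each side, whereas in (ii) only the sign-changing $a_1$ term remains, so a single root appears on the side matching the sign of $\int_\Omega a_1(x)|z|^{p_1}z\,dx$. Concretely, I would expand $I(\lambda z)=0$ and divide by $\lambda^2$, obtaining the equation $\|\nabla z\|^2=h(\lambda)$ with
\[
h(\lambda) := \operatorname{sgn}(\lambda)|\lambda|^{p_1-1} A_1 + \sum_{i=2}^{r} |\lambda|^{p_i-1} A_i + \sum_{j=1}^{s} |\lambda|^{q_j-1} C_j,
\]
where $A_1=\int_\Omega a_1(x)|z|^{p_1}z\,dx$, $A_i=\int_\Omega a_i(x)|z|^{p_i+1}\,dx\ge 0$ and $C_j=\int_\Omega b_j(x)|z|^{q_j+1}\,dx\le 0$ by \eqref{signs}; the $\operatorname{sgn}(\lambda)$ in front of $A_1$ is the entire source of the asymmetry.

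Next I would treat the two half-lines via the factorizations $h(\lambda)=\lambda^{p_1-1}h_1(\lambda)$ for $\lambda>0$ and $h(-\mu)=\mu^{p_1-1}\tilde h_1(\mu)$ for $\mu=-\lambda>0$, where
\[
h_1(\lambda)=A_1+\sum_{i=2}^{r}\lambda^{p_i-p_1}A_i+\sum_{j=1}^{s}\lambda^{q_j-p_1}C_j,\qquad \tilde h_1(\mu)=-A_1+\sum_{i=2}^{r}\mu^{p_i-p_1}A_i+\sum_{j=1}^{s}\mu^{q_j-p_1}C_j.
\]
Because $p_i>p_1$ for $i\ge 2$ and $q_j<p_1$ with $C_j\le 0$, each term of $h_1'$ and $\tilde h_1'$ has the form $(p_i-p_1)\lambda^{p_i-p_1-1}A_i\ge 0$ or $(q_j-p_1)\lambda^{q_j-p_1-1}C_j\ge 0$, so both $h_1$ and $\tilde h_1$ are non-decreasing on $(0,\infty)$, exactly as in Lemma~\ref{lm3}.

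For part (i), the hypothesis $\sum_{i\ge 2}A_i>0$ forces at least one $A_i$ with $i\ge 2$ to be strictly positive, hence $h_1$ and $\tilde h_1$ are strictly increasing with limit $+\infty$ at $+\infty$. Distinguishing the two subcases $\sum_j C_j<0$ and $\sum_j C_j=0$ exactly as in cases (A) and (B) of Lemma~\ref{lm3}, on each half-line the function $h$ is continuous, eventually strictly increasing to $+\infty$, and equals the positive level $\|\nabla z\|^2$ at a unique point, giving $\lambda_1^*>0$ and $\lambda_2^*<0$. For part (ii), $\sum_{i\ge 2}A_i=0$ together with $A_i\ge 0$ forces $A_i=0$ for all $i\ge 2$, so $h_1(\lambda)=A_1+\sum_j\lambda^{q_j-p_1}C_j$ and $\tilde h_1(\mu)=-A_1+\sum_j\mu^{q_j-p_1}C_j$. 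If $A_1>0$, then on $(0,\infty)$ the function $h_1$ is non-decreasing with supremum $A_1>0$, so $h(\lambda)=\lambda^{p_1-1}h_1(\lambda)$ increases to $+\infty$ and hits $\|\nabla z\|^2$ at exactly one $\lambda^*>0$; meanwhile $\tilde h_1<0$ throughout $(0,\infty)$, since $-A_1<0$ and the $\mu^{q_j-p_1}C_j$ terms are non-positive, so $h(-\mu)<0$ for all $\mu>0$ and no root exists on $\lambda<0$. The case $A_1<0$ is symmetric and yields a unique negative $\lambda^*$.

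The main obstacle is bookkeeping rather than conceptual: one has to verify the boundary behaviour of $h_1$ and $\tilde h_1$ at $0^+$ in every combination of signs of $A_1$ and $\sum_j C_j$ so that the horizontal level $\|\nabla z\|^2$ is crossed exactly once in each predicted case and never crossed in the non-existence branch of (ii). Once those limits are pinned down, the rest is a direct reprise of the monotonicity and intermediate-value reasoning already established for Lemma~\ref{lm3}.
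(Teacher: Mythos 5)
Your proposal is correct and is precisely the adaptation the paper has in mind: the authors omit the proof of Lemma~\ref{lm3n}, stating only that it follows the same idea as Lemma~\ref{lm3}, and your treatment — isolating the sign asymmetry in the $\operatorname{sgn}(\lambda)|\lambda|^{p_1-1}A_1$ term, factoring out $|\lambda|^{p_1-1}$ on each half-line, and reusing the monotonicity and limit analysis of $h_1$, $\tilde h_1$ — fills in exactly those details. The case bookkeeping (signs of $A_1$ and $\sum_j C_j$, behaviour at $0^+$ and $+\infty$) is handled correctly, so no gaps remain.
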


The proof of Lemma~\ref{lm3n} is based on the  same idea  as the proof of Lemma~\ref{lm3} and we omit it.

\begin{cor}\label{cor1}
Suppose $f(x,z)$ satisfies either $(F1)$ or $(F2)$. Then the Nehari manifold $\mathcal{N}$ is not empty.
\end{cor}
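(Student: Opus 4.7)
The plan is to exhibit a concrete function $z \in \mathrm{H}_0^1(\Omega)$ for which one of Lemma~\ref{lm3} or Lemma~\ref{lm3n} applies, thereby producing a scalar multiple $\lambda^\ast z \in \mathcal{N}$. The strategy splits into two cases depending on whether any of the nonnegative coefficients $a_i(x)$, $i\geq 2$, is non-trivial.

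First I would treat the case where there exists some $i_0 \in \{2,\ldots,r\}$ such that $a_{i_0}(x) \not\equiv 0$. Since $a_{i_0}\in\mathrm{C}(\overbar{\Omega})$ and $a_{i_0}\geq 0$ by \eqref{signs}, the set $U=\{x\in\Omega: a_{i_0}(x)>0\}$ is open and non-empty. Pick any $z\in\mathrm{C}_c^\infty(U)$ with $z\not\equiv 0$; then $\|\nabla z\|\neq 0$ and $\int_\Omega a_{i_0}(x)|z|^{p_{i_0}+1}\,dx>0$. Combined with the non-negativity of the remaining terms $\int_\Omega a_i(x)|z|^{p_i+1}\,dx$ for $i\geq 2$, condition \eqref{14} (respectively \eqref{14n}) holds, and Lemma~\ref{lm3} or Lemma~\ref{lm3n}(i) yields a non-zero $\lambda^\ast$ with $\lambda^\ast z\in\mathcal{N}$.

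Next I would cover the remaining possibility, namely $a_i\equiv 0$ for every $i=2,\ldots,r$ (which includes the trivial case $r=1$). Here I use hypothesis \eqref{5n}: $a_1$ is sign-changing, so in particular it takes strictly positive values on some non-empty open set $V\subset\Omega$. Choose $z\in\mathrm{C}_c^\infty(V)$ with $z\geq 0$ and $z\not\equiv 0$; then $\|\nabla z\|\neq 0$ and $\sum_{i=2}^r \int_\Omega a_i|z|^{p_i+1}\,dx = 0$. For nonlinearity $(F1)$, one has $\int_\Omega a_1(x)|z|^{p_1+1}\,dx >0$, so \eqref{15} applies and Lemma~\ref{lm3} produces the desired $\lambda^\ast z\in\mathcal{N}$. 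For nonlinearity $(F2)$, the relevant integral is $\int_\Omega a_1(x)|z|^{p_1}z\,dx = \int_\Omega a_1(x)z^{p_1+1}\,dx>0$, since I chose $z\geq 0$; thus \eqref{15n} holds and Lemma~\ref{lm3n}(ii) gives $\lambda^\ast z\in\mathcal{N}$.

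Since every case yields an element of $\mathcal{N}$, the manifold is non-empty. The only subtle point is the distinction between the integrands $|z|^{p_1+1}$ (in $(F1)$) and $|z|^{p_1}z$ (in $(F2)$): the sign-changing hypothesis on $a_1$ suffices in both settings precisely because one is free to choose the test function $z$ to be non-negative and supported in the positivity set of $a_1$, making the two integrands coincide up to the subset of $\Omega$ where the support lies.
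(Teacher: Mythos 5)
Your proposal is correct and follows essentially the same route as the paper: choose a test function supported in the positivity set of a suitable coefficient so that one of the hypotheses \eqref{14}, \eqref{15} (resp.\ \eqref{14n}, \eqref{15n}) holds, then invoke Lemma~\ref{lm3} or Lemma~\ref{lm3n}. Your case split on whether some $a_i$, $i\ge 2$, is nontrivial, and your explicit choice of $z\ge 0$ to handle the integrand $|z|^{p_1}z$ in $(F2)$, are minor refinements of the paper's argument (which always localizes in the positivity set of $a_1$ and omits the $(F2)$ details).
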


\begin{proof}

Let $f(x,z)$ be defined by $(F1)$.
From \eqref{count} and \eqref{5n} it follows that there exists $\Omega_1\subset \Omega$ such that $a_1(x)>0$ for $x\in\Omega_1$.
 If we choose  a function $z(x)\in\mathrm{H}_0^1(\Omega_1)$, $z(x)\ne 0$ in $\Omega_1$ and $z(x)\equiv 0$ in $\Omega \setminus \Omega_1$, then either condition \eqref{14} or \eqref{15} is satisfied. Therefore,
Lemma~\ref{lm3} gives us that $\lambda^\ast z \in \mathcal{N}$ for some constant $\lambda^\ast >0$.

The proof of the statement for $f(x,z)$ defined by $(F2)$ is similar to the above one and we omit it. 
\end{proof}
The following Lemma~\ref{lm4} clarifies the relation between  the Nehari functional $I(z)$ and $\| \nabla z \|$. To formulate the result we consider the equation
\be\label{xi0}
\varphi(\xi)=1,  \qquad \text{where} \qquad \varphi(\xi)=\sum_{i=1}^{r} A_i (C_{p_i+1})^{p_i+1} \xi^{p_i-1}.
\ee
The constants $A_i$, $i=1,...,r$ and  $C_{q}$ are defined in \eqref{5} and \eqref{Sob}, respectively. Since $\varphi(\xi)$ is a strictly increasing function for $\xi>0$, then equation \eqref{xi0} has a unique positive root $\xi_0$.

\begin{lem}\label{lm4}
Suppose $f(x,z)$ satisfies either $(F1)$ or $(F2)$, $z\in\mathrm{H}_0^1(\Omega)$ and
$\xi_0$ is the unique positive root of the equation \eqref{xi0}.
Then the following statements are true:
\begin{itemize}
\item[(i)] If $0<\|\nabla z\|<\xi_0$,  then   $I(z)>0$;
\item[(ii)]If $I(z)<0$, then $\|\nabla z\|>\xi_0$;
\item[(iii)]If $I(z)=0$, then either $\| \nabla z \| = 0$ or $\|\nabla z\|\geq \xi_0$.
\end{itemize}
\end{lem}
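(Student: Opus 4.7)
The plan is to prove all three statements from a single upper bound on $\int_{\Omega} z\,f(x,z)\,dx$, which reduces the Nehari functional to a function of the single scalar $\|\nabla z\|$. The key observation is that the $b_j$ terms already have a favorable sign for $I$, so they can simply be discarded, while the $a_i$ terms can be bounded in absolute value using \eqref{5} and the Sobolev embedding \eqref{Sob}.

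First I would handle both $(F1)$ and $(F2)$ uniformly. For $(F1)$ the sign condition \eqref{signs} gives $b_j(x)\le 0$, so
\[
-\sum_{j=1}^s\int_{\Omega} b_j(x)|z|^{q_j+1}\,dx \ge 0,
\]
and the pointwise bound $|a_i(x)|\le A_i$ together with \eqref{Sob} yields
\[
\sum_{i=1}^r\int_{\Omega} a_i(x)|z|^{p_i+1}\,dx
\le \sum_{i=1}^r A_i\,\|z\|_{p_i+1}^{p_i+1}
\le \sum_{i=1}^r A_i\,C_{p_i+1}^{p_i+1}\,\|\nabla z\|^{p_i+1}.
\]
In case $(F2)$, the only change is the first term, but $|a_1(x)|z|^{p_1}z|\le A_1 |z|^{p_1+1}$, so the same upper bound holds. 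Plugging into the definition \eqref{I} of $I$ and factoring $\|\nabla z\|^2$ out, I obtain the master inequality
\[
I(z)\ \ge\ \|\nabla z\|^2\,\bigl(1-\varphi(\|\nabla z\|)\bigr),
\]
where $\varphi$ is exactly the function introduced in \eqref{xi0}.

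With this inequality in hand, the three conclusions are immediate from the strict monotonicity of $\varphi$ on $(0,\infty)$ and the defining identity $\varphi(\xi_0)=1$. For (i), if $0<\|\nabla z\|<\xi_0$ then $\varphi(\|\nabla z\|)<1$, so the right-hand side is strictly positive. For (ii), if $I(z)<0$ then the master inequality forces $\|\nabla z\|>0$ and $\varphi(\|\nabla z\|)>1$, hence $\|\nabla z\|>\xi_0$. For (iii), if $I(z)=0$ and $\|\nabla z\|\ne 0$, then $1-\varphi(\|\nabla z\|)\le 0$, which by monotonicity gives $\|\nabla z\|\ge\xi_0$.

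I do not expect a significant obstacle here; the only mild subtlety is making sure the argument is truly uniform across $(F1)$ and $(F2)$, which is why I would phrase the bound on the $a_1$ term using $|a_1(x)|$ rather than $a_1(x)$ itself. The rest is a purely algebraic consequence of the definition of $\varphi$ and its strict monotonicity.
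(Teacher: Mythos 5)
Your proof is correct and follows essentially the same route as the paper: both bound $\int_\Omega z\,f(x,z)\,dx$ from above by discarding the nonpositive $b_j$ contributions, estimating the $a_i$ terms via \eqref{5} and \eqref{Sob} to get $\int_\Omega z\,f(x,z)\,dx\le \|\nabla z\|^2\varphi(\|\nabla z\|)$, and then conclude from the strict monotonicity of $\varphi$ and $\varphi(\xi_0)=1$. Packaging this as the single inequality $I(z)\ge \|\nabla z\|^2\bigl(1-\varphi(\|\nabla z\|)\bigr)$ is only a cosmetic reorganization of the paper's argument.
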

\begin{proof}
(i) Since $0<\|\nabla z\|<\xi_0$,  then  $\varphi(\|\nabla z\|)<\varphi(\xi_0)=1$. The statement $I(z)>0$ follows from the following chain of inequalities 
\begin{align*}
 \int_{\Omega} z f(x,z) \, d x &\leq \sum_{i=1}^{r} \int_{\Omega} |a_i(x)| |z(x)|^{p_i+1} \, d x +
 \sum_{j=1}^s\int_{\Omega} b_j(x) |z(x)|^{q_j+1} \, d x \\
&\leq \sum_{i=1}^{r}\int_{\Omega} |a_i(x)| \, |z(x)|^{p_i+1} \, d x
\leq \sum_{i=1}^{r} A_i \|z\|_{p_i+1}^{p_i+1}\\
&\leq \sum_{i=1}^{r} A_i C_{p_i+1}^{p_i+1}\|\nabla z\|^{p_i+1}=\|\nabla z\|^2 \varphi(\|\nabla z\|)<\|\nabla z\|^2. 
\end{align*}

(ii)
Condition $I(z)<0$ gives
\begin{align*}
 \|\nabla z\|^2&<\int_{\Omega} z f(x,z) \, d x \leq \sum_{i=1}^{r} \int_{\Omega} |a_i(x)| |z(x)|^{p_i+1} \, d x +
 \sum_{j=1}^s\int_{\Omega} b_j(x) |z(x)|^{q_j+1} \, d x \\
&\leq \sum_{i=1}^{r}\int_{\Omega} |a_i(x)| |z(x)|^{p_i+1} \, d x
\leq \sum_{i=1}^{r} A_i \|z\|_{p_i+1}^{p_i+1}\\
&\leq \sum_{i=1}^{r} A_i C_{p_i+1}^{p_i+1}\|\nabla z\|^{p_i+1}=\|\nabla z\|^2 \varphi(\|\nabla z\|).
\end{align*}
Hence,
$$
\varphi(\|\nabla z\|)> 1=\varphi(\xi_0).
$$
Since $\varphi(\xi)$ is strictly increasing for $\xi>0$, we get 
$$
\|\nabla z\| > \xi_0>0.
$$

(iii) Analogously to (ii) we have $\|\nabla z\|^2\leq \|\nabla z\|^2 \varphi(\|\nabla z\|)$. Hence, either $\| \nabla z \| = 0$ or  $\|\nabla z\|\geq \xi_0$.
Lemma~\ref{lm4} is proved.
\end{proof}

\begin{lem}\label{lm5}
Suppose $f(x,z)$ satisfies either  $(F1)$ or $(F2)$ and  $z\in\mathrm{H}_0^1(\Omega)$. Then the following estimate holds
$$
d\geq\frac{p_1-1}{2(p_1+1)} \xi_0^2>0,
$$
where $\xi_0$ is the unique positive root of \eqref{xi0}.
\end{lem}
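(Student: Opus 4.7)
The proof should be a direct consequence of two ingredients already in the excerpt: the identity \eqref{IJ} decomposing $J$ in terms of $I$, the gradient norm, and the non-negative term $B$; and the lower bound for $\|\nabla z\|$ on the Nehari manifold supplied by Lemma~\ref{lm4}(iii).

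The plan is as follows. First I would fix an arbitrary $z \in \mathcal{N}$. By the definition of $\mathcal{N}$, we have $I(z) = 0$ and $\|\nabla z\| \neq 0$. Substituting $I(z) = 0$ into \eqref{IJ} gives
$$
J(z) = \frac{p_1-1}{2(p_1+1)} \|\nabla z\|^2 + B(z).
$$
Since the assumptions \eqref{pq} and \eqref{signs} ensure $B(z) \geq 0$, we drop the $B(z)$ term to obtain
$$
J(z) \geq \frac{p_1-1}{2(p_1+1)} \|\nabla z\|^2.
$$

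Next I would invoke Lemma~\ref{lm4}(iii): since $I(z)=0$ and $\|\nabla z\|\neq 0$, we must have $\|\nabla z\| \geq \xi_0$. Combining this with the previous inequality gives the uniform lower bound
$$
J(z) \geq \frac{p_1-1}{2(p_1+1)} \xi_0^2
$$
valid for every $z \in \mathcal{N}$. Taking the infimum over $\mathcal{N}$ yields $d \geq \frac{p_1-1}{2(p_1+1)} \xi_0^2$. Strict positivity follows because $p_1 > 1$ by \eqref{pq} and $\xi_0 > 0$ by construction (the unique positive root of the strictly increasing function $\varphi$ in \eqref{xi0}).

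There is no real obstacle here: the heavy lifting has already been done in Lemmas~\ref{lm3}, \ref{lm3n} (which ensure $\mathcal{N}\neq\emptyset$ via Corollary~\ref{cor1}, so the infimum is taken over a nonempty set) and in Lemma~\ref{lm4}(iii). The only thing to be careful about is to state explicitly why $B(z)\geq 0$ for any $z\in H^1_0(\Omega)$, which is immediate from the sign conditions \eqref{signs} together with the ordering $q_j < p_1 < p_i$ in \eqref{pq} that controls the signs of the coefficients $(p_i-p_1)$ and $(q_j-p_1)$ appearing in $B$.
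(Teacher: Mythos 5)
Your proposal is correct and follows exactly the paper's own argument: it uses Corollary~\ref{cor1} to ensure $\mathcal{N}\neq\emptyset$, the identity \eqref{IJ} with $I(z)=0$ and $B(z)\geq 0$, and Lemma~\ref{lm4}~(iii) to bound $\|\nabla z\|\geq\xi_0$ before taking the infimum. No gaps; nothing further is needed.
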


\begin{proof}
Corollary~\ref{cor1} gives us  that  $\mathcal{N}\neq \emptyset$, and therefore, the critical constant $d$ is well defined by \eqref{13}.
If $z\in \mathcal{N}$, i.e. $I(z)=0$ and $\|\nabla z\| \ne 0$, then from Lemma~\ref{lm4}~(iii)
 we get
\be\label{r0}
\|\nabla z\|\geq \xi_0>0.
\ee
Thus, from \eqref{13}, \eqref{IJ},   and  \eqref{r0} 
we conclude
$$
d=\inf_{z\in \mathcal{N}} J(z)\geq \frac{p_1-1}{2(p_1+1)} \inf_{z\in \mathcal{N}} \| \nabla z\|^2 \geq \frac{p_1-1}{2(p_1+1)} \xi_0^2>0,
$$
which proves Lemma~\ref{lm5}.
\end{proof}

\begin{lem}\label{lm6}
Suppose $f(x,z)$ satisfies either  $(F1)$ or $(F2)$, $z\in\mathrm{H}_0^1(\Omega)$ and $\| \nabla z\| \neq 0$. If  $I(z)<0$, then there exists a  unique constant $\lambda^\ast > 0$, 
  such that  $I(\lambda^\ast z)=0$ and $\lambda^\ast<1$.
\end{lem}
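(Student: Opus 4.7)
The plan is to examine $I(\lambda z)$ as a function of the scalar $\lambda>0$, locate a zero in $(0,1)$ by an intermediate value argument, and import uniqueness directly from Lemma~\ref{lm3} (for $(F1)$) or Lemma~\ref{lm3n} (for $(F2)$).

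First I would write out $I(\lambda z)$ exactly as in the computations opening the proofs of Lemma~\ref{lm3} and Lemma~\ref{lm3n}. For $\lambda>0$ this gives
$$I(\lambda z)=\lambda^2\bigl(\|\nabla z\|^2-h(\lambda)\bigr),$$
where $h(\lambda)$ is the same polynomial combination of $\lambda^{p_i-1}$ and $\lambda^{q_j-1}$ appearing in those proofs. The hypothesis $I(z)<0$ translates to $h(1)>\|\nabla z\|^2$. Because all exponents $p_i-1$ and $q_j-1$ are strictly positive (thanks to \eqref{pq}), we have $h(\lambda)\to 0$ as $\lambda\to 0^+$, so $h(\lambda)<\|\nabla z\|^2$, and hence $I(\lambda z)>0$, for all sufficiently small $\lambda>0$. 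Continuity of $\lambda\mapsto I(\lambda z)$ together with $I(1\cdot z)<0$ then produces some $\lambda^*\in(0,1)$ with $I(\lambda^* z)=0$ by the intermediate value theorem; this $\lambda^*$ automatically satisfies $\lambda^* z\in\mathcal N$ since $\|\nabla(\lambda^* z)\|=\lambda^*\|\nabla z\|\neq 0$.

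For uniqueness I would verify that the hypothesis $I(z)<0$ forces one of the cases of the earlier lemmas to apply. Using the sign conventions \eqref{signs}, the inequality
$$\int_{\Omega} z f(x,z)\,dx>\|\nabla z\|^2>0$$
combined with $\sum_j\int_{\Omega}b_j|z|^{q_j+1}\,dx\le 0$ forces the $a_i$-contribution to be strictly positive. If $\sum_{i=2}^{r}\int_{\Omega}a_i|z|^{p_i+1}\,dx>0$, condition \eqref{14} (resp.\ \eqref{14n}) holds; otherwise this sum vanishes and the remaining $a_1$-integral must be strictly positive, giving condition \eqref{15} (resp.\ \eqref{15n} with the positive-sign branch). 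In either case Lemma~\ref{lm3} (resp.\ Lemma~\ref{lm3n}) provides a unique positive scalar with $\lambda^* z\in\mathcal N$, which must therefore coincide with the zero produced above and so lies in $(0,1)$.

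The main obstacle I anticipate is not analytical but organizational: the two nonlinearities $(F1)$ and $(F2)$ and the two subcases of each uniqueness lemma (two zeros $\pm\lambda^*$ versus a single signed zero) must be matched consistently, particularly in the $(F2)$ situation where Lemma~\ref{lm3n}(ii) distinguishes positive from negative $\lambda^*$ by the sign of $\int_{\Omega}a_1|z|^{p_1}z\,dx$; one must check that the case $I(z)<0$ selects the branch producing a positive zero, so that the IVT zero in $(0,1)$ is genuinely unique among all positive $\lambda$.
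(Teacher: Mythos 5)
Your proposal is correct and follows essentially the same route as the paper: both deduce that $I(z)<0$ forces condition \eqref{14} or \eqref{15} (resp.\ \eqref{14n} or the positive branch of \eqref{15n}), import uniqueness of the positive zero from Lemma~\ref{lm3} (resp.\ Lemma~\ref{lm3n}), and conclude $\lambda^\ast<1$ from the sign of $I(\lambda z)$ at $\lambda=1$. The only cosmetic difference is that you locate the zero in $(0,1)$ by the intermediate value theorem, while the paper reads off $I(\lambda z)<0$ for $\lambda>\lambda^\ast$ from the monotone-crossing structure established in the proof of Lemma~\ref{lm3}.
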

\begin{proof}
Let $f(x,z)$ be defined by $(F1)$.
From  $I(z)<0$ we get 
$$
\begin{aligned}
0<\| \nabla z\|^2<&\int_{\Omega} z f(x,z) \, d x = \sum_{i=1}^{r} \int_{\Omega} a_i(x) |z(x)|^{p_i+1} \, d x +
 \sum_{j=1}^s\int_{\Omega} b_j(x) |z(x)|^{q_j+1} \, d x \\
&\leq \int_{\Omega} a_1(x) |z(x)|^{p_1+1} \, d x +\sum_{i=2}^{r} \int_{\Omega} a_i(x) |z(x)|^{p_i+1} \, d x.
\end{aligned}
$$
Hence, either condition \eqref{14} or \eqref{15} is satisfied. From Lemma~\ref{lm3} it follows that there exists a unique $\lambda^\ast>0$ such that  $I(\lambda^\ast z)=0$, $I(\lambda z)>0$ for $\lambda\in(0,\lambda^\ast)$ and  $I(\lambda z)<0$ for $\lambda\in(\lambda^\ast,\infty)$. Since $I(z)<0$, we conclude that $\lambda^\ast<1$.

If $f(x,z)$ is defined by $(F2)$, then
$$
\begin{aligned}
0<\| \nabla z\|^2<&\int_{\Omega} z f(x,z) \, d x < \\
&\leq \int_{\Omega} a_1(x) |z(x)|^{p_1} z(x)\, d x +\sum_{i=2}^{r} \int_{\Omega} a_i(x) |z(x)|^{p_i+1} \, d x.
\end{aligned}
$$
From the above inequalities it follows that either condition \eqref{14n} or \eqref{15n} with 
$\int_{\Omega} a_1(x) |z(x)|^{p_1} z(x) \, d x>0$
is satisfied. Then, applying Lemma~\ref{lm3n}, we prove the statement.
\end{proof}

\begin{lem}\label{lm7}
Suppose $f(x,z)$ satisfies either $(F1)$ or $(F2)$, $z\in\mathrm{H}_0^1(\Omega)$ and $\| \nabla z\| \neq 0$. If   $I(z)<0$, then the inequality
\be\label{18}
I(z)<(p_1+1)(J(z)-d)
\ee
holds.
\end{lem}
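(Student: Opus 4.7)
The plan is to use Lemma~\ref{lm6} to find a rescaling $\lambda^{\ast} z$ lying on the Nehari manifold, then compare $J(\lambda^{\ast}z)$ with $J(z)$ via the identity \eqref{IJ}. The strict inequality \eqref{18} should fall out from the fact that $\lambda^{\ast}<1$.

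Concretely, since $I(z)<0$ and $\|\nabla z\|\ne 0$, Lemma~\ref{lm6} gives a unique $\lambda^{\ast}\in(0,1)$ with $\lambda^{\ast}z\in\mathcal{N}$, so by the definition \eqref{13} of $d$,
$$d\leq J(\lambda^{\ast}z).$$
Next I would apply \eqref{IJ} to $\lambda^{\ast}z$; since $I(\lambda^{\ast}z)=0$ this becomes
$$J(\lambda^{\ast}z)=\tfrac{p_1-1}{2(p_1+1)}(\lambda^{\ast})^2\|\nabla z\|^2 + B(\lambda^{\ast}z).$$

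The key step is to show that this expression is \emph{strictly less} than $\tfrac{p_1-1}{2(p_1+1)}\|\nabla z\|^2 + B(z)$. The first term is strictly smaller because $(\lambda^{\ast})^2<1$ and $\|\nabla z\|\ne 0$. For the second term I would expand
$$B(\lambda^{\ast}z)=\sum_{i=2}^r \tfrac{p_i-p_1}{(p_1+1)(p_i+1)}(\lambda^{\ast})^{p_i+1}\!\!\int_{\Omega}\!a_i(x)|z|^{p_i+1}\,dx+\sum_{j=1}^s \tfrac{q_j-p_1}{(p_1+1)(q_j+1)}(\lambda^{\ast})^{q_j+1}\!\!\int_{\Omega}\!b_j(x)|z|^{q_j+1}\,dx,$$
and note that \emph{every} term of $B(z)$ is non-negative (because in the first sum $p_i>p_1$ and $a_i\ge 0$, while in the second $q_j<p_1$ and $b_j\le 0$, so both signs flip to give a non-negative product). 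Since $0<\lambda^{\ast}<1$ and all exponents $p_i+1$, $q_j+1$ are positive, each factor $(\lambda^{\ast})^{p_i+1}, (\lambda^{\ast})^{q_j+1}\in(0,1)$, so $B(\lambda^{\ast}z)\le B(z)$. Hence
$$d\leq J(\lambda^{\ast}z)<\tfrac{p_1-1}{2(p_1+1)}\|\nabla z\|^2 + B(z).$$

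Finally, applying \eqref{IJ} to $z$ itself rewrites the right-hand side as $J(z)-\tfrac{1}{p_1+1}I(z)$, so the strict inequality becomes $d<J(z)-\tfrac{1}{p_1+1}I(z)$, which rearranges to \eqref{18}. The only subtle point is the monotonicity $B(\lambda^{\ast}z)\le B(z)$; once one observes that every summand in $B$ is individually non-negative thanks to the sign hypotheses \eqref{signs} and the ordering \eqref{pq}, monotonicity in $\lambda$ is immediate. The rest is bookkeeping with the identity \eqref{IJ}.
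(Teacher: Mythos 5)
Your proof is correct and follows essentially the same route as the paper: rescale to $\lambda^\ast z\in\mathcal{N}$ via Lemma~\ref{lm6}, bound $d\le J(\lambda^\ast z)$, and use \eqref{IJ} together with $0<\lambda^\ast<1$ and the term-by-term non-negativity of $B$. In fact your exponents $(\lambda^\ast)^{p_i+1}$ and $(\lambda^\ast)^{q_j+1}$ in $B(\lambda^\ast z)$ are the correct ones (the paper's display shows $|\lambda^\ast|^{p_i-p_1}$ and $|\lambda^\ast|^{q_j-p_1}$, an apparent typo with which the monotonicity of the $b_j$-terms would not even hold), and your explicit justification of $B(\lambda^\ast z)\le B(z)$ is precisely the step the paper leaves implicit.
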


\begin{proof}
Let $f(x,z)$ be defined by $(F1)$.
From Lemma~\ref{lm6} it follows that there exists a unique constant $0<\lambda^\ast<1$ such that $I(\lambda^\ast z)=0$.
From \eqref{13} and \eqref{IJ} we get the following chain of inequalities:
\begin{align*}
d\leq J(\lambda^\ast z)&=\frac{1}{p_1+1}I(\lambda^\ast z)+\frac{p_1-1}{2(p_1+1)}(\lambda^\ast)^2\|\nabla z\|^2 \\
 &+\sum_{i=2}^r \frac{p_i-p_1}{(p_1+1)(p_i+1)}|\lambda^\ast|^{p_i-p_1} \int_{\Omega} a_i(x) |z(x)|^{p_i+1} \, d x \\
&+\sum_{j=1}^s \frac{q_j-p_1}{(p_1+1)(q_j+1)}|\lambda^\ast|^{q_j-p_1} \int_{\Omega} b_j(x) |z(x)|^{p_j+1} \, d x\\
& <\frac{p_1-1}{2(p_1+1)}\|\nabla z\|^2  
+\sum_{i=2}^r \frac{p_i-p_1}{(p_1+1)(p_i+1)} \int_{\Omega} a_i(x) |z(x)|^{p_i+1} \, d x \\
&+\sum_{j=1}^s \frac{q_j-p_1}{(p_1+1)(q_j+1)} \int_{\Omega} b_j(x) |z(x)|^{p_j+1} \, d x
=J(z)-\frac{1}{p_1+1} I(z).
\end{align*}
The proof of inequality \eqref{18} for nonlinearity $f(x,z)$ defined by $(F2)$ is similar to the above one and we omit it.
Lemma~\eqref{lm7} is proved.
\end{proof}

\section{Sub-critical initial energy}
In this section we 
investigate the global behavior of the  solutions to problem \eqref{1}--\eqref{2} with nonlinearity $(F1)$ or $(F2)$  in the case of sub-critical initial energy $E(0)<d$. For this purpose, first
 we formulate  the local existence and uniqueness result, which is proved in Appendix A. Then we
prove global existence or finite time  blow up of the local solutions depending on the sign of the Nehari functional.
\begin{thm}(Local existence)\label{th1}
There exists $T>0$ such that problem \eqref{1}--\eqref{2} with nonlinearity $(F1)$ or $(F2)$ admits a unique local weak solution 
 in  $[0,T]$ provided the additional restriction $p_r<\frac{n}{n-2}$ for $n\geq 3$. Moreover, if $T_m= \sup\{T>0: \ u(t,x)~ \text{exists~ on~} [0,T]\times \Omega\}\leq \infty$,  then
\begin{itemize}
\item [(i)] every local solutions  $u(t,x)$ satisfies the conservation law 

\be\label{cons}
E(0)=E(t) \ \ \text{for} \ t\in[0,T_m),
\ee
where the energy functional $E(t)$ is defined by
\be \label{8}
\begin{split}
E(t):=E(u(t),u_t(t) )=&\frac{1}{2}\left(
\|u_t(t)\|^2 +\|\nabla u(t)\|^2 \right)
- \int_{\Omega} \int_0^{u(t,x)} f(x,z) \, dz \, d x;
\end{split}
\ee

\item [(ii)]
$$
\text{if} \qquad \limsup_{t\to T_m, t<T_m} \|\nabla u(t)\| <\infty,
\qquad \text{then} \qquad T_m=\infty.
$$
\end{itemize}
\end{thm}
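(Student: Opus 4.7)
The plan is to construct the local solution via a contraction mapping argument in the Banach space $X_T = \mathrm{C}([0,T];\mathrm{H}_0^1(\Omega)) \cap \mathrm{C}^1([0,T]; \mathrm{L}^2(\Omega))$, normed by $\|u\|_{X_T} = \sup_{t\in[0,T]}(\|\nabla u(t)\|+\|u_t(t)\|)$. The stronger restriction $p_r < n/(n-2)$ for $n\ge 3$ is precisely what makes the superposition operator $v\mapsto f(\cdot, v(\cdot))$ map $\mathrm{H}_0^1(\Omega)$ into $\mathrm{L}^2(\Omega)$ and be locally Lipschitz: a H\"older split with exponents $n$ and $2n/(n-2)$ combined with the Sobolev embedding \eqref{Sob} controls $\bigl\| |u|^{p_i-1}u - |v|^{p_i-1}v\bigr\|_2$ by $C(R)\|\nabla(u-v)\|$ on balls $\|\nabla u\|,\|\nabla v\|\le R$, for every exponent $p_i\le p_r$ and $q_j<p_r$ appearing in $(F1)$--$(F2)$.

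First I would freeze $v\in B_R\subset X_T$ and solve the \emph{linear} wave equation $u_{tt}-\Delta u = f(x,v(t,x))$ with data $u_0,u_1$. The standard theory for the linear wave equation with an $\mathrm{L}^2$ source produces a unique $u\in X_T$ obeying the linear energy identity $\tfrac12\tfrac{d}{dt}(\|u_t\|^2+\|\nabla u\|^2) = (f(\cdot,v),u_t)$; Gr\"onwall then yields $\|\Phi(v)\|_{X_T}\le R$ once $R$ is chosen from the data and $T$ is small enough. The Lipschitz estimate above gives $\|\Phi(v_1)-\Phi(v_2)\|_{X_T}\le C(R)T\,\|v_1-v_2\|_{X_T}$, so $\Phi$ is a contraction for $T$ sufficiently small. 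Banach's fixed-point theorem produces the unique weak solution on $[0,T]$, and the maximal existence time $T_m$ is defined as the supremum of times to which this construction can be iterated.

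For the conservation law (i), I would derive it first on Galerkin approximations $u_m$ built from eigenfunctions of $-\Delta$ with Dirichlet data, where conservation of $E(u_m,\partial_t u_m)$ is an immediate ODE identity, and then pass to the limit using the strong convergences in $X_T$ coming from the contraction estimates. An alternative is to note that the restriction $p_r<n/(n-2)$ yields $f(\cdot,u)\in \mathrm{C}([0,T];\mathrm{L}^2(\Omega))$, hence $u_{tt}=\Delta u+f(\cdot,u)\in \mathrm{C}([0,T];\mathrm{H}^{-1}(\Omega))$ with enough regularity to make the pairing with $u_t$ rigorous. For the continuation criterion (ii), I would argue by contradiction: if $\limsup_{t\to T_m^-}\|\nabla u(t)\|<\infty$ but $T_m<\infty$, then \eqref{cons} together with the growth bound $\bigl|\int_\Omega\int_0^u f(x,z)\,dz\,dx\bigr|\le C(\|\nabla u\|)$ (again via \eqref{Sob}) forces $\|u_t(t)\|$ to stay bounded as well. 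Since the length of the contraction step depends only on these two norms, one may restart from any $t_0$ sufficiently close to $T_m$ and extend the solution strictly past $T_m$, contradicting maximality.

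The main obstacle I anticipate is the rigorous justification of the energy identity: a priori the fixed-point construction only gives $u_{tt}\in \mathrm{C}([0,T];\mathrm{H}^{-1})$, so testing with $u_t\in \mathrm{L}^2$ must be carefully justified either by a Galerkin-limit argument that uses Sobolev compactness to pass to the limit in $\int f(\cdot,u_m)\,u_{m,t}$, or by upgrading the regularity of $u_{tt}$ through the $\mathrm{L}^2$-valuedness of $f(\cdot,u)$ under the restriction on $p_r$. The sign-changing nature of $a_1(x)$ plays no role in any of these estimates because only the absolute-value bound $|f(x,u)|\le\sum A_i|u|^{p_i}+\sum B_j|u|^{q_j}$ from \eqref{5} is used.
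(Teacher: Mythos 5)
Your proposal follows essentially the same route as the paper's Appendix A: a contraction map on a ball in $\mathrm{C}([0,T];\mathrm{H}_0^1)\cap \mathrm{C}^1([0,T];\mathrm{L}^2)$, where the frozen-source linear problem is solved (the paper does this by Galerkin), the Lipschitz estimate for the superposition operator is obtained from a H\"older/Sobolev splitting that requires exactly $p_r<\frac{n}{n-2}$, conservation of energy gives the bound on $\|u_t\|$ from the bound on $\|\nabla u\|$, and the continuation criterion follows by restarting the fixed-point step near $T_m$. Your extra care about justifying the pairing of $u_{tt}\in \mathrm{H}^{-1}$ with $u_t$ in the energy identity is a point the paper treats only formally, but the argument is the same in substance.
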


Theorem~\ref{th1} is a special case  of Theorem~\ref{th1n} in Appendix A, where the result is formulated and proved for problem \eqref{1}--\eqref{2} with a  more general nonlinearity $f(x,u)$ than  
$(F1)$ and $(F2)$.

In the energy sub-critical case, $E(0)<d$, we employ the potential well method in proving global existence or finite time blow up of the weak solutions.
In the framework of this approach there are two important subsets of $\mathrm{H}^1_0(\Omega)$:
$$
W=\left\{z\in \mathrm{H}^1_0(\Omega):I(z)>0 \right\}\cup\left\{ 0\right\}, \ \ \ 
V=\left\{z\in \mathrm{H}_0^1(\Omega):I(z)<0 \right\}.
$$
For $E(0)<d$, in the following theorem we prove the invariance of $W$ and $V$  under the flow of \eqref{1}--\eqref{2}, i.e. the sign preserving properties of the Nehari functional $I(z)$.

\begin{thm}\label{th2}
Suppose $u(t,x)$ is the weak solution of \eqref{1}--\eqref{2}   with nonlinearity $(F1)$ or $(F2)$ in the maximal existence time interval $[0,T_m)$, $0<T_m\leq \infty$. Then:
\begin{itemize}
\item[(i)]~ if $0<E(0)<d$ and $u_0(x) \in W$, then $u(t,x) \in W$ for every $t \in[0,T_m)$;

\item[(ii)]~ if $0<E(0)<d$ and $u_0(x) \in V$, then $u(t,x) \in V$ for every $t \in[0,T_m)$.
\end{itemize}
\end{thm}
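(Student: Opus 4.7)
The plan is to argue by contradiction, exploiting the openness of $W$ and $V$ in $\mathrm{H}_0^1(\Omega)$, the connectedness of the time interval, and energy conservation to rule out the trajectory $t\mapsto u(t,\cdot)$ ever hitting the Nehari manifold $\mathcal{N}$.

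First, I would apply the conservation law from Theorem~\ref{th1}(i) together with the identity
\[
E(t)=\tfrac12\|u_t(t)\|^2+J(u(t))
\]
to obtain $J(u(t))\leq E(0)<d$ for every $t\in[0,T_m)$. Since $d=\inf_{z\in\mathcal{N}}J(z)$ by \eqref{13}, this immediately forces $u(t,\cdot)\notin\mathcal{N}$ throughout the maximal interval of existence: the trajectory can never touch $\mathcal{N}$.

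Next, I would verify that both $W$ and $V$ are open subsets of $\mathrm{H}_0^1(\Omega)$. Openness of $V=\{I<0\}$ is immediate from continuity of the Nehari functional $I$. For $W=\{I>0\}\cup\{0\}$, the subset $\{I>0\}$ is open, and the potentially delicate point $z=0$ is handled by Lemma~\ref{lm4}(i): the entire ball $\{z:\|\nabla z\|<\xi_0\}$ sits inside $W$. Since $\mathrm{H}_0^1(\Omega)=W\sqcup V\sqcup\mathcal{N}$ is a disjoint partition, the previous step tells us that the trajectory lives in the open set $W\cup V$.

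Finally, by continuity of $u:[0,T_m)\to\mathrm{H}_0^1(\Omega)$ built into the weak-solution definition, the preimages $u^{-1}(W)$ and $u^{-1}(V)$ are open, disjoint, and together cover the connected interval $[0,T_m)$. Connectedness forces exactly one of them to be empty. In case (i), the hypothesis $u_0\in W$ places $0\in u^{-1}(W)$, so $u^{-1}(V)=\emptyset$ and $u(t)\in W$ for all $t\in[0,T_m)$; case (ii) is completely symmetric. The only step in the outline that requires real care is establishing openness of $W$ in spite of the ad-hoc inclusion of the origin, and Lemma~\ref{lm4}(i) is exactly the tool that removes this obstruction; the remainder of the argument is purely topological.
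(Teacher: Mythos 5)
Your proof is correct and rests on the same two pillars as the paper's own argument: the conservation law gives $J(u(t))\le E(0)<d$, which together with $d=\inf_{z\in\mathcal{N}}J(z)$ keeps the trajectory off $\mathcal{N}$, and Lemma~\ref{lm4}~(i) is exactly what shows the origin is an interior point of $W$. The only difference is packaging — you phrase it as openness of $W$ and $V$ plus connectedness of $[0,T_m)$, whereas the paper argues by contradiction at a first crossing time (and in case (ii) additionally invokes Lemma~\ref{lm4}~(ii) to show the crossing point has nonzero gradient, a step your uniform topological argument absorbs automatically).
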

\begin{proof}
(i) Assume, for sake of contradiction, that there exists some $t_1$ such that $u(t_1,x)\notin  W$. Since $I(u(t))$ is continuous with respect to $t$, we get that there exists $t_0\in (0,t_1)$ such that $u(t_0,x) \in \partial W=\left\{z\in \mathrm{H}^1_0(\Omega):I(z)=0 \right\}$. Note, that $0\notin \partial W$. Indeed,  
from Lemma~\ref{lm4}~(i), we have that if $0<\|\nabla z \|<\xi_0$, then
$I(z)>0$, which implies the inclusion $B_{\xi_0}=\{ z\in \mathrm{H}^1_0(\Omega):  0<\| \nabla z\|<\xi_0\} \subset W$.
Therefore, $u(t_0,x) \in \partial W$ reads $I(u(t_0))=0$ with $\|\nabla u(t_0)\| \ne 0$, i.e. 
$u(t_0,x) \in \mathcal{N}$.
From the conservation law \eqref{cons} and the definition of $d$, we obtain the following impossible chain of inequalities
$$
d=\inf_{z\in \mathcal{N}} J(z)\leq J(u(t_0))=E(t_0)- \frac{1}{2} \|u_t(t_0)\|^2\leq E(0)<d.
$$
So, $u(t,x)\in W$ for every $t\in [0,T_m)$ and the statement (i) is proved.

(ii) The proof is similar to the proof of (i). Assume, for sake of contradiction, that  $t_1$ is the first time such that 
$u(t_1,x)\notin V$, i.e.  $u(t,x)\in V$ for every $t\in[0,t_1)$ and $I(u(t_1))=0$.
From Lemma~\ref{lm4}~(ii) it follows that $ \| \nabla u(t)\| >\xi_0$ for every $t\in[0,t_1)$. Hence,  $ \| \nabla u(t_1)\|  \geq \xi_0>0$ and
 $u(t_1,x) \in \mathcal{N}$. Similar to (i), we get $d\leq J(u(t_1))\leq E(0)$, which contradicts the assumption $E(0)<d$. Therefore, we conclude that $u(t,x)\in V$ for every $t\in [0,T_m)$, which proves statement (ii).
\end{proof}

Further on, we will use the following helpful  relations between the energy, the potential functional and the Nehari functional (see \eqref{8} and \eqref{IJ}):
\be\label{EJ}
\begin{aligned}
E(0)=&E(t)=\frac{1}{2}\|u_t(t)\|^2 +J(u(t))
=\frac{1}{p_1+1}I(u(t))+\frac{1}{2} \|u_t(t)\|^2 \\
&+\frac{p_1-1}{2(p_1+1)}\|\nabla u(t)\|^2 +
 B(u(t)), \quad  B(u(t)) \geq 0, \  t\in [0,T_m).
\end{aligned}
\ee
The following lemma  allows us to use the potential well method in the case of non-positive energy. 
\begin{lem}\label{E0}
Suppose $E(0)<0$ or $E(0)=0$ and $\| \nabla u_0\| \ne  0$. Then every weak solution of \eqref{1}--\eqref{2} with nonlinearity $(F1)$ or $(F2)$  belongs to $V$.
\end{lem}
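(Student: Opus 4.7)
The plan is to read off the sign of $I(u(t))$ directly from the energy identity \eqref{EJ}, which after rearrangement expresses $-\frac{1}{p_1+1} I(u(t))$ as a sum of the four manifestly non-negative quantities $-E(0)$, $\tfrac12 \|u_t(t)\|^2$, $\tfrac{p_1-1}{2(p_1+1)}\|\nabla u(t)\|^2$ and $B(u(t))$ (the last three after moving $E(0)$ to the left). This is the engine of the proof; the only thing that has to be argued carefully is that equality $I(u(t))=0$ cannot be attained in the borderline case $E(0)=0$.

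For the first case $E(0)<0$, I would simply drop the three non-negative terms on the right and conclude
$$\frac{1}{p_1+1}I(u(t)) \le E(0) < 0 \qquad \text{for all } t\in[0,T_m),$$
which is the desired statement $u(t,\cdot)\in V$. No further work is needed.

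For the second case $E(0)=0$ with $\|\nabla u_0\|\neq 0$, the same identity evaluated at $t=0$ gives
$$-\frac{1}{p_1+1}I(u_0) = \frac12\|u_1\|^2 + \frac{p_1-1}{2(p_1+1)}\|\nabla u_0\|^2 + B(u_0) > 0,$$
where strict positivity is forced by the middle term, so $u_0\in V$. To propagate this to $t>0$, I would run a continuity argument: assume there is a first time $t_1\in(0,T_m)$ at which $u(t_1,\cdot)\notin V$, so $I(u(t))<0$ on $[0,t_1)$ and $I(u(t_1))=0$ by continuity. The main obstacle here is the possibility that $\|\nabla u(t_1)\|=0$, which would take us outside the Nehari manifold and block the standard comparison with $d$; this is precisely where Lemma~\ref{lm4}(ii) saves us, giving $\|\nabla u(t)\|>\xi_0$ on $[0,t_1)$, hence $\|\nabla u(t_1)\|\ge \xi_0>0$ by continuity in $H^1_0(\Omega)$.

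With $\|\nabla u(t_1)\|\neq 0$ and $I(u(t_1))=0$ we have $u(t_1,\cdot)\in\mathcal{N}$, so using the conservation law and Lemma~\ref{lm5},
$$d \le J(u(t_1)) = E(t_1) - \tfrac12\|u_t(t_1)\|^2 \le E(0) = 0,$$
which contradicts $d>0$. Hence $u(t,\cdot)\in V$ for all $t\in[0,T_m)$. The core difficulty is thus entirely concentrated in ruling out $u(t)\equiv 0$ in $H^1_0$ along the trajectory, and Lemma~\ref{lm4}(ii) together with continuity handles it cleanly.
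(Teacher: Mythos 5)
Your proof is correct. The case $E(0)<0$ coincides with the paper's argument: one simply reads $I(u(t))<0$ off the identity \eqref{EJ}. For the borderline case $E(0)=0$, however, you take a genuinely different route: you rerun the potential-well invariance argument of Theorem~\ref{th2}(ii) at the zero energy level --- first exit time $t_1$ from $V$, Lemma~\ref{lm4}(ii) plus continuity to guarantee $\|\nabla u(t_1)\|\ge\xi_0>0$ so that $u(t_1,\cdot)\in\mathcal{N}$, and then the contradiction $0<d\le J(u(t_1))\le E(0)=0$, which requires the lower bound $d>0$ from Lemma~\ref{lm5}. The paper avoids $d$ and the Nehari manifold entirely: it notes that $I(u_0)<0$ forces $\|\nabla u_0\|>\xi_0$, and that if $\|\nabla u(t)\|$ ever dropped below $\xi_0$ it would, by the intermediate value theorem, take a value in $(0,\xi_0)$ at some $t_0$, where Lemma~\ref{lm4}(i) gives $I(u(t_0))>0$ and hence $E(t_0)>0$ by \eqref{EJ}, contradicting $E(0)=0$; once $\|\nabla u(t)\|\ge\xi_0>0$ is known for all $t$, \eqref{EJ} with $E(0)=0$ immediately yields $I(u(t))<0$. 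Your version is equally rigorous and has the virtue of unifying this lemma with Theorem~\ref{th2}(ii) (the identical argument works for any $E(0)<d$ with $u_0\in V$); the paper's version is lighter, using only Lemma~\ref{lm4} and the conservation law rather than the positivity of the depth of the potential well.
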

\begin{proof}
If $E(0)<0$, it is obvious from \eqref{EJ}, that $I(u(t))<0$, i.e. $u(t) \in V$ for every $t \in[0, T_m)$. 
Now we consider the case $E(0)=0$ and $\| \nabla u_0\| \ne  0$. Since $I(u_0)<0$, from Lemma~\ref{lm4}~(ii) we obtain that $\|\nabla u_0\| >\xi_0$.  
We will prove that $\|\nabla u(t)\| \geq \xi_0$ for every $t\in[0,T_m)$. Otherwise,  there exists $t_0 \in(0, T_m)$ such that 
$0<\|\nabla u(t_0)\| < \xi_0$ and Lemma~\ref{lm4}~(i) gives us $I(u(t_0))>0$, which implies $E(t_0)=E(0)>0$. This contradicts the assumption $E(0)=0$.
Therefore, $\|\nabla u(t)\| \geq \xi_0$ and $I(u(t))<0$ for every $t \in[0,T_m)$. The proof is completed.
\end{proof}

\begin{thm}\label{th3}
Suppose $0<E(0)<d$   and $u_0(x)\in W$.
 Then problem \eqref{1}--\eqref{2} with nonlinearity $(F1)$ or $(F2)$
has no blowing up weak solutions. Moreover,  if the additional restriction  $p_r<\frac{n}{n-2}$ for $n\geq 3$ holds, then the  problem  
 has a unique  global weak solution.   
\end{thm}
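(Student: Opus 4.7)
The plan is to combine the invariance of $W$ (Theorem~\ref{th2}(i)) with the energy decomposition \eqref{EJ} to obtain a uniform $\mathrm{H}^1_0$--bound on the weak solution, and then invoke the blow-up criterion from Theorem~\ref{th1}(ii) for the global existence part.

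First I would apply Theorem~\ref{th2}(i): since $0<E(0)<d$ and $u_0\in W$, we have $u(t,\cdot)\in W$ for every $t\in[0,T_m)$, which means $I(u(t))\ge 0$ on the whole interval of existence. Next, I would plug this sign information into the identity \eqref{EJ}. The point is that on the right-hand side
$$
E(0)=\frac{1}{p_1+1}I(u(t))+\frac{1}{2}\|u_t(t)\|^2+\frac{p_1-1}{2(p_1+1)}\|\nabla u(t)\|^2+B(u(t)),
$$
all four terms are non-negative, because $I(u(t))\ge 0$, $B(u(t))\ge 0$ by definition, and the kinetic term is obviously non-negative. Dropping the first, second and fourth terms yields the uniform gradient bound
$$
\|\nabla u(t)\|^2\le \frac{2(p_1+1)}{p_1-1}E(0)<\frac{2(p_1+1)}{p_1-1}d, \qquad t\in[0,T_m).
$$

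For the non-blow-up assertion, I would use the Sobolev/Poincar\'e embedding \eqref{Sob} with $q=2$ to deduce $\|u(t)\|\le C_2\|\nabla u(t)\|$, so $\limsup_{t\to T_m}\|u(t)\|<\infty$. By Definition~2.2 this rules out blow-up at $T_m$ for any $T_m\le\infty$, which is exactly the statement that there are no blowing-up weak solutions.

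For the global existence under the additional restriction $p_r<\frac{n}{n-2}$ (needed only to invoke the local well-posedness Theorem~\ref{th1}), I would apply the blow-up alternative in Theorem~\ref{th1}(ii): since we have already shown $\limsup_{t\to T_m,\,t<T_m}\|\nabla u(t)\|<\infty$, that theorem forces $T_m=\infty$, so the unique local weak solution extends to a global one. I do not anticipate a major obstacle here; the whole proof is essentially a bookkeeping argument built on the invariance of $W$ and the non-negativity of $B(u(t))$, which together make the decomposition \eqref{EJ} deliver the $\mathrm{H}^1_0$-bound immediately. The only delicate point worth checking is that $I(u(t))$ is continuous in $t$, which is what permits the invariance statement to be applied for every $t\in[0,T_m)$ and not just pointwise; this is inherited from the regularity class of weak solutions in Definition~2.1.
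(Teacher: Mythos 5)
Your proposal is correct and follows essentially the same route as the paper: invariance of $W$ from Theorem~\ref{th2}(i), the decomposition \eqref{EJ} with $I(u(t))\ge 0$ and $B(u(t))\ge 0$ to get the uniform bound $\|\nabla u(t)\|^2\le \frac{2(p_1+1)}{p_1-1}E(0)$, the embedding \eqref{Sob} to exclude blow-up of $\|u(t)\|$, and Theorem~\ref{th1}(ii) for the global continuation. The only cosmetic difference is that the paper phrases the first part as a proof by contradiction, whereas you argue directly; the content is identical.
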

\begin{proof}
Suppose by contradiction that problem \eqref{1}--\eqref{2} with nonlinearity nonlinearity $(F1)$ or $(F2)$
has  blowing up weak solution $u(t,x)$ defined in the maximal existence time interval $[0, T_m)$, $0< T_m\leq \infty$. 
If $u_0(x)\in W$, from Theorem~\ref{th2}~(i) it follows that $u(t) \in W$ for every $t\in(0,T_m)$.
From  \eqref{EJ},   the estimate
\be\label{50}
\begin{split}
d > E(0)=E(t)
\geq \frac{p_1-1}{2(p_1+1)}\|\nabla u(t)\|^2
\end{split}
\ee
holds for every $t\in(0,T_m)$. Applying  the embedding theorem \eqref{Sob}, we get 
$$
\| u(t)\|^2 \leq C_2^2 \frac{2(p_1+1)}{p_1-1}d,
$$
which contradicts our assumption that $u(t,x)$ blows up.

If additionally $p_r<\frac{n}{n-2}$ for $n\geq 3$, then according to Theorem~\ref{th1}, problem \eqref{1}--\eqref{2} with nonlinearity $(F1)$ or $(F2)$   has a unique local weak solution $u(t,x)$.
Since  estimate  \eqref{50} is satisfied in the maximal existence time interval of $u(t,x)$, from  Theorem~\ref{th1}~(ii) we have that the  weak solution is globally defined. Thus,  Theorem~\ref{th3} is proved.
\end{proof}

\begin{thm}\label{th4}
If either  $E(0)< 0$ or $0\leq E(0)<d$ and $u_0(x) \in V$, then problem \eqref{1}--\eqref{2} with nonlinearity $(F1)$ or $(F2)$ has no global in time weak solution 
 $u(t,x)$.

\end{thm}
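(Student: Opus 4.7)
The plan is to execute Levine's concavity method applied to $\psi(t)=\|u(t)\|^{2}$, using the sharp functional inequality furnished by Lemma~\ref{lm7}. I would argue by contradiction, assuming that a weak solution $u(t,x)$ exists globally on $[0,\infty)$. The first step is to verify that $I(u(t))<0$ for every $t$ in the existence interval: for $E(0)<0$, or $E(0)=0$ with $\|\nabla u_0\|\neq 0$, this is exactly Lemma~\ref{E0}; for $0<E(0)<d$ with $u_0\in V$ it is the invariance assertion Theorem~\ref{th2}~(ii). In particular Lemma~\ref{lm4}~(ii) then gives $\|\nabla u(t)\|\geq\xi_0>0$ throughout, so $\psi(t)>0$ on the whole existence interval.

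Next I would differentiate. A direct computation, using the weak form of \eqref{1} with test function $u(t,\cdot)$, yields $\psi'(t)=2(u(t),u_t(t))$ and $\psi''(t)=2\|u_t(t)\|^{2}-2I(u(t))$. Since $I(u(t))<0$ and $\|\nabla u(t)\|\neq 0$, Lemma~\ref{lm7} applies; combined with the conservation law \eqref{cons} in the form $J(u(t))=E(0)-\tfrac12\|u_t(t)\|^{2}$ it produces
\[
-I(u(t)) > (p_1+1)(d-E(0))+\tfrac{p_1+1}{2}\|u_t(t)\|^{2},
\]
so that
\[
\psi''(t) \geq (p_1+3)\|u_t(t)\|^{2}+2(p_1+1)(d-E(0)).
\]
Multiplying by $\psi(t)>0$ and invoking Cauchy--Schwarz in the form $(\psi'(t))^{2}\leq 4\psi(t)\|u_t(t)\|^{2}$ delivers the core differential inequality
\[
\psi(t)\psi''(t)-\gamma(\psi'(t))^{2} \geq 2(p_1+1)(d-E(0))\,\psi(t),\qquad \gamma:=\tfrac{p_1+3}{4}>1.
\]
Since $E(0)<d$, the right-hand side is strictly positive, and this inequality matches the framework of Lemma~\ref{lm1} with $Q(t)\geq 0$.

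To conclude I would show that $\psi$ actually blows up in finite time. The preceding inequality forces $\psi''(t)\geq 2(p_1+1)(d-E(0))>0$, so $\psi'(t)$ is strictly increasing and hence becomes positive at some $t^{\ast}\geq 0$ irrespective of the sign of $\psi'(0)=2(u_0,u_1)$. On $[t^{\ast},T_m)$ the auxiliary function $z(t)=\psi(t)^{1-\gamma}$ is concave (from $\psi\psi''\geq\gamma\psi'^{2}$) and satisfies $z'(t^{\ast})<0$, so $z$ reaches zero in finite time and $\psi$ blows up. Lemma~\ref{lm1} then forces $T_m<\infty$, contradicting the assumption of global existence. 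The main technical point is the careful bookkeeping that extracts the \emph{strictly positive} forcing term $2(p_1+1)(d-E(0))$ from Lemma~\ref{lm7}; this is precisely what allows the concavity mechanism to launch without any sign restriction on the scalar product $(u_0,u_1)$.
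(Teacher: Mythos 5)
Your proof is correct and follows the paper's own strategy: Levine's concavity method applied to $\psi(t)=\|u(t)\|^2$ with $\gamma=\frac{p_1+3}{4}$, propagating $I(u(t))<0$ via Lemma~\ref{E0} and Theorem~\ref{th2}~(ii) and then forcing finite-time blow-up of $\psi$. The two differences are organizational rather than substantive. First, you obtain the uniform lower bound $\psi''(t)\geq 2(p_1+1)(d-E(0))>0$ from Lemma~\ref{lm7} (combined with $J(u(t))=E(0)-\tfrac12\|u_t(t)\|^2$) in all energy regimes at once, whereas the paper splits into the three cases $E(0)<0$, $E(0)=0$, $0<E(0)<d$ and only invokes Lemma~\ref{lm7} in the last one, using the cruder constants $M=-2(p_1+1)E(0)$ and $M=(p_1-1)\xi_0^2$ in the first two; your unification is legitimate because $I(u(t))<0$ together with Lemma~\ref{lm4}~(ii) guarantees $\|\nabla u(t)\|\neq 0$ in every case, so Lemma~\ref{lm7} applies, and $d>0$ by Lemma~\ref{lm5} makes $d-E(0)>0$ even for non-positive energy. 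Second, for the endgame the paper shows $\lim_{t\to\infty}\psi(t)=\infty$ by integrating $\psi''\geq M$ twice and then cites Lemma~\ref{lm1}, while you run the classical concavity argument on $z=\psi^{1-\gamma}$ directly (making the final appeal to Lemma~\ref{lm1} redundant but harmless); both routes reach the same contradiction.
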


\begin{proof}

Assume, for sake of contradiction, that $u(t,x)$ is globally defined. It is easy to obtain
for function $\psi(t)=\|u(t) \|^2$ that
\be\label{52}
\psi'(t)=2(u(t),u_t(t)),  \quad \psi{''}(t)=2\|u_t(t)\|^2 -2I(u(t)).
\ee
Moreover, using \eqref{52} and \eqref{EJ}, we come to the following expression for $\psi{''}(t)$:
\be\label{51}
\psi''(t)=(p_1+3)\|u_t(t)\|^2 - 2(p_1+1)E(0) + (p_1-1) \|\nabla u(t)\|^2
 + 2(p_1+1) B(u(t)).
\ee
First, we show that under the assumptions of the theorem, there exists a constant $M>0$, such that
\be\label{M}
\psi''(t)\geq M>0  \quad \text{for} \ \ t\in[0,\infty).
\ee

\noindent
\emph{Case 1.} $E(0)< 0$ \\
From  \eqref{51}  we directly obtain that \eqref{M} holds with $M=- 2(p_1+1)E(0)$.

\noindent
\emph{Case 2.} $E(0)=0$\\
Since $u_0 \in V$, i.e. $I(u_0)<0$, from  Lemma~\ref{lm4}~(ii) it follows that $\| \nabla u_0\|>\xi_0>0$. According to Lemma~\ref{E0} we have $I(u(t))<0$ for every $t \in[0,T_m)$.
Expression \eqref{51} and Lemma~\ref{lm4}~(ii) give us \eqref{M} with $M= (p_1-1)\xi_0^2$.

\noindent
\emph{Case 3.}  $0<E(0)<d$\\
By \eqref{52}, 
Theorem~\ref{th2}(ii) and Lemma~\ref{lm7}, we have the  following estimate   
$$
\psi''(t)\geq - 2I(u(t))
>2(p_1+1)(d-J(u(t))
\geq 2(p_1+1) (d-E(0))>0, 
$$
i.e. $M=2(p_1+1) (d-E(0))$.
Therefore, in all three cases the inequality \eqref{M} holds.
Integrating \eqref{M} twice, we obtain
$$
\psi(t)\geq M t^2 +\psi'(0) t +\psi(0),
$$
which yields to $\lim_{t\to\infty}\psi(t)=\infty$.

On the other hand,  $\psi(t)$ satisfies equation \eqref{11}, 
where $\gamma=\frac{p_1+3}{4}>1$ and, depending on the energy level, $Q(t)$ has one of the following forms:

\noindent
\emph{Case 1. and Case 2.} $E(0)\leq 0$
\begin{align*}
Q(t)=&(p_1+3)\left(\|u(t)\|^2 \|u_t(t)\|^2 - (u(t),u_t(t))^2\right) 
     -2(p_1+1) E(0)\|u(t)\|^2\\
		&+ \left[(p_1-1) \| \nabla u(t)\|^2 + 2(p_1+1) B(u(t))\right] \|u(t)\|^2
		 \geq 0~~\text{for}~ t\in[0,\infty)
\end{align*}

\noindent
\emph{Case 3.} $0<E(0)<d$
\begin{align*}
Q(t)=&(p_1+3)\left(\|u(t)\|^2 \|u_t(t)\|^2 - (u(t),u_t(t))^2\right) \\
     &+ 2\left[(p_1+1)(J(u(t))-
E(0))-I(u(t))\right] \|u(t)\|^2 \geq 0~~\text{for}~ t\in[0,\infty).
\end{align*}
From  Lemma~\ref{lm1}, function $\psi(t)$ blows up for a finite time. Hence  $u(t,x)$ also blows up for a finite time,  which contradicts our assumption that $u(t,x)$ is globally defined.
The proof of Theorem~\ref{th4} is completed. 
\end{proof}

\begin{rem}
Suppose that the additional restriction $p_r< \frac{n}{n-2}$ for $n\geq 3$ holds. Then,  according to Theorem~\ref{th1}, problem \eqref{1}--\eqref{2} with nonlinearity $(F1)$ or $(F2)$ has a unique local solution, which blows up for a finite time (see Theorem~\ref{th4}).
\end{rem}

\section{Super-critical initial energy}
Below, we will use the notation
\be\label{poin}
\mathcal{C}=C_2^{-2},
\ee
where $C_2$ is the constant defined in \eqref{Sob} for $q=2$. The constant $\mathcal{C}$ can be interpreted  as the constant in the Poincar\'{e} inequality
\be\label{poin1}
\| \nabla z\|^2\geq \mathcal{C} \|z\|^2 \ \ \text{for} \ z\in  \mathrm{H}_0^1(\Omega).
\ee

\subsection{Nonexistence of global in time solutions  when   the scalar product of the initial data has arbitrary sign}
\begin{thm}\label{th6}
If 
\be\label{56}
0<E(0)<\frac{\mathcal{C}(p_1-1)}{2(p_1+1)}\|u_0\|^2 + \frac{\sqrt{\mathcal{C} (p_1-1)}}{p_1+1} (u_0,u_1)
\ee
with
constant $\mathcal{C}$   defined in \eqref{poin},
then   problem \eqref{1}--\eqref{2} with nonlinearity $(F1)$ or $(F2)$ has no global in time weak solution 
 $u(t,x)$.
\end{thm}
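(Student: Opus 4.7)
The plan is to apply the generalized concavity method via Lemma~\ref{lm2} to $\psi(t) = \|u(t)\|^2$. First I would compute $\psi'(t) = 2(u(t), u_t(t))$ and $\psi''(t) = 2\|u_t(t)\|^2 - 2I(u(t))$, then invoke the conservation law \eqref{cons} together with \eqref{EJ} to eliminate $I(u(t))$. This reproduces the representation \eqref{51}, namely
\begin{equation*}
\psi''(t) = (p_1+3)\|u_t(t)\|^2 - 2(p_1+1)E(0) + (p_1-1)\|\nabla u(t)\|^2 + 2(p_1+1)B(u(t)).
\end{equation*}
Exactly this computation was already carried out in the proof of Theorem~\ref{th4}, so this step is routine.

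To fit the framework of Lemma~\ref{lm2}, I would multiply the above by $\psi(t)$, subtract $\frac{p_1+3}{4}\psi'^2(t) = (p_1+3)(u(t),u_t(t))^2$, and split $(p_1-1)\|\nabla u(t)\|^2 = \mathcal{C}(p_1-1)\|u(t)\|^2 + (p_1-1)(\|\nabla u(t)\|^2 - \mathcal{C}\|u(t)\|^2)$ via the Poincar\'e inequality \eqref{poin1}. The outcome is
\begin{equation*}
\psi''(t)\psi(t) - \frac{p_1+3}{4}\psi'^2(t) = \mathcal{C}(p_1-1)\psi^2(t) - 2(p_1+1)E(0)\psi(t) + H(t),
\end{equation*}
where $H(t)\geq 0$ collects the Cauchy--Schwarz remainder $(p_1+3)[\psi(t)\|u_t(t)\|^2 - (u(t),u_t(t))^2]$, the Poincar\'e remainder $(p_1-1)(\|\nabla u(t)\|^2 - \mathcal{C}\psi(t))\psi(t)$, and $2(p_1+1)B(u(t))\psi(t)$. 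Thus $\psi$ satisfies \eqref{12} with $\gamma = (p_1+3)/4 > 1$, $\alpha = \mathcal{C}(p_1-1) > 0$, $\beta = 2(p_1+1)E(0) > 0$, and Lemma~\ref{lm2} will apply \emph{once one knows $\psi$ blows up in the sense of Definition 2.3}.

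The main obstacle is exactly this last point: Lemma~\ref{lm2} only converts blow up into finite-time blow up, it does not produce blow up on its own. Arguing by contradiction, I would suppose $T_m = \infty$ and exploit the cruder bound $\psi''(t) \geq \mathcal{C}(p_1-1)\psi(t) - 2(p_1+1)E(0)$ obtained by dropping the non-negative $\|u_t(t)\|^2$ and $B(u(t))$ contributions. Setting $k = \sqrt{\mathcal{C}(p_1-1)}$ and introducing the auxiliary function $h(t) = \psi'(t) + k\psi(t)$, this rearranges to the first-order linear inequality $h'(t) \geq k\,h(t) - 2(p_1+1)E(0)$, whose integration via the factor $e^{-kt}$ yields
\begin{equation*}
h(t) \geq \left(h(0) - \frac{2(p_1+1)E(0)}{k}\right) e^{kt} + \frac{2(p_1+1)E(0)}{k}.
\end{equation*}
The crucial observation is that hypothesis \eqref{56}, rewritten as $k\|u_0\|^2 + 2(u_0,u_1) > 2(p_1+1)E(0)/k$, is exactly the statement $h(0) > 2(p_1+1)E(0)/k$, so the bracketed coefficient is strictly positive and $h(t) \to \infty$. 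A second integration of $\psi'(t) + k\psi(t) \geq h(t)$, again with integrating factor $e^{kt}$, then forces $\psi(t)\to\infty$ as $t\to\infty$.

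By Definition 2.3 this is blow up of $\psi$ at $T_m = \infty$, so Lemma~\ref{lm2} forces $T_m < \infty$, contradicting the assumption of global existence. Notice that no sign assumption on $(u_0,u_1)$ was used along the way: the scalar product enters only through $h(0)$, and \eqref{56} is precisely the condition that permits $(u_0,u_1)$ to have either sign, provided it is not so negative as to overwhelm $k\|u_0\|^2$ relative to $2(p_1+1)E(0)/k$.
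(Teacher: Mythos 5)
Your proposal is correct and follows essentially the same route as the paper: derive the representation \eqref{51}, show $\psi(t)\to\infty$ under \eqref{56} assuming global existence, and then invoke Lemma~\ref{lm2} with $\gamma=\frac{p_1+3}{4}$, $\alpha=\mathcal{C}(p_1-1)$, $\beta=2(p_1+1)E(0)$ and the same non-negative remainder $H$. The only (cosmetic) difference is in the middle step: the paper writes the explicit variation-of-parameters solution of $\psi''=\alpha\psi-\beta+G$ and observes that \eqref{56} makes the coefficient of $e^{\sqrt{\alpha}t}$ positive, while you factor the inequality through $h=\psi'+\sqrt{\alpha}\,\psi$; the two positivity conditions $h(0)>\beta/\sqrt{\alpha}$ and $\psi(0)+\psi'(0)/\sqrt{\alpha}-\beta/\alpha>0$ are identical.
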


\begin{proof}
Analogously to the proof of Theorem~\ref{th4}, we assume that $u(t,x)$ is globally defined.
From \eqref{51},  we get that $\psi(t)=\|u(t)\|^2$
 is a solution  of the equation
\be\label{53}
\psi''(t)=\alpha \psi(t) - \beta + G(t),
\ee
where 
\be\label{ab}
\alpha=\mathcal{C}(p_1-1)>0, ~~~\beta=2(p_1+1)E(0)>0,
\ee
 and
$$
G(t)=(p_1-1)(\|\nabla u(t)\|^2 - \mathcal{C} \|u(t)\|^2) + (p_1+3)\|u_t(t)\|^2 +2(p_1+1)B(u(t)).
$$
Equation  \eqref{53} has the following classical solution
\begin{align*}
\psi(t)=&\frac{1}{2}\left(\psi(0)+\frac{1}{\sqrt{\alpha}} \psi'(0)-\frac{\beta}{\alpha}\right) e^{\sqrt{\alpha}t}
+ \frac{1}{2}\left(\psi(0)-\frac{1}{\sqrt{\alpha}} \psi'(0)-\frac{\beta}{\alpha}\right) e^{-\sqrt{\alpha}t} \\
&+ \frac{\beta}{\alpha}+\frac{1}{\sqrt{\alpha}} \int_{0}^t G(s) \sinh(\sqrt{\alpha}(t-s)) \, d s.
\end{align*}
Since \eqref{56} is equivalent to
$$
\psi(0)+\frac{1}{\sqrt{\alpha}} \psi'(0)-\frac{\beta}{\alpha}>0
$$
and $G(t)\geq 0$ for $t\in[0,\infty)$, we  conclude that $\lim_{t\to\infty} \psi(t)=\infty$.

Furthermore, $\psi(t)$ is also a solution to the equation \eqref{12} with $\alpha$ and $\beta$ defined in \eqref{ab}, 
 $\gamma=\frac{p_1+3}{4}$ and 
\begin{align*}
H(u(t))=&(p_1+3)(\|u_t(t)\|^2\|u(t)\|^2 - (u(t),u_t(t))^2) 
+(p_1-1) (\| \nabla u(t)\|^2 - \mathcal{C}\|u(t)\|^2) \|u(t)\|^2 \\
&+ 2(p_1+1) B(u(t)) \|u(t)\|^2 \geq 0 ~~\text{for} \quad t\in[0,\infty).
\end{align*}
Thus, according to  Lemma~\ref{lm2}, the function $\psi(t)$, as well as  $u(t,x)$, blows up for a finite time, which contradicts our assumption $T_m=\infty$.
Theorem~\ref{th6} is proved.
\end{proof}

\subsection{Nonexistence of global in time solutions  when   the scalar product of the initial data has positive sign}
\begin{thm}\label{th7}
If  
\be \label{2m}
\|u_0\|\ne 0,~~~
(u_0,u_1)\geq 0, \quad 
0<E(0)<\frac{\mathcal{C}(p_1-1)}{2(p_1+1)}\|u_0\|^2 + \frac{1}{2}\frac{(u_0,u_1)^2}{\|u_0\|^2}
\ee
with
constant $\mathcal{C}$   defined in \eqref{poin},
then   problem \eqref{1}--\eqref{2} with nonlinearity $(F1)$ or $(F2)$ has no global in time weak solution 
 $u(t,x)$.
\end{thm}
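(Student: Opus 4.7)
The plan is to follow the strategy of Theorem~\ref{th6}: set $\psi(t) := \|u(t)\|^2$, argue by contradiction assuming $T_m = \infty$, and reduce the problem to showing that $\psi$ is unbounded on $[0,\infty)$, so that Lemma~\ref{lm2} forces $T_m < \infty$ and yields the contradiction. A direct computation combining $\psi''(t) = 2\|u_t\|^2 - 2I(u(t))$, the energy conservation \eqref{cons}, and the decomposition \eqref{EJ} produces the identity
\[
\psi''(t)\psi(t) - \gamma\psi'(t)^2 = \alpha\psi(t)^2 - \beta\psi(t) + H(t),
\]
with $\gamma := (p_1+3)/4 > 1$, $\alpha := \mathcal{C}(p_1-1) > 0$, $\beta := 2(p_1+1)E(0) > 0$, and
\[
H(t) = (p_1+3)\bigl[\|u_t\|^2\psi - (u,u_t)^2\bigr] + (p_1-1)\bigl[\|\nabla u\|^2 - \mathcal{C}\psi\bigr]\psi + 2(p_1+1)B(u)\psi \geq 0
\]
by Cauchy--Schwarz, Poincar\'e \eqref{poin1}, and $B(u) \geq 0$. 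Thus $\psi$ satisfies the hypotheses of Lemma~\ref{lm2}.

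The key tool will be the first integral
\[
W(t) := \psi'(t)^2\psi(t)^{-2\gamma} + G(\psi(t)), \qquad G(s) := \frac{\alpha s^{2-2\gamma}}{\gamma-1} - \frac{2\beta s^{1-2\gamma}}{2\gamma-1},
\]
of the autonomous ODE obtained by dropping $H$; a short computation using the identity above gives $W'(t) = 2\psi'(t)H(t)/\psi(t)^{2\gamma+1}$, so $W$ is non-decreasing on any interval on which $\psi'\geq 0$. An elementary calculus exercise shows $G$ attains its global maximum $G_{\max}$ at $s = \beta/\alpha$.

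The main obstacle is to show $\psi'(t) > 0$ for every $t > 0$. I plan to split into two cases according to $\tau := \alpha\|u_0\|^2/\beta$. For $\tau \geq 1$ (i.e.\ $\|u_0\|^2 \geq \beta/\alpha$), the standard continuation argument using $\psi'' \geq \alpha\psi - \beta$ shows $\psi(t) \geq \beta/\alpha$ propagates, hence $\psi' \geq \psi'(0) \geq 0$, and strict positivity of $\psi'(0)$ or of $\psi''(0)$ (one of which is guaranteed by the strict form of \eqref{2m}) then gives $\psi'(t) > 0$ for $t > 0$. For $\tau < 1$, condition \eqref{2m} forces $(u_0,u_1) > 0$, and a patient algebraic reduction---substituting the lower bound $\psi'(0)^2 > 4\psi(0)(\beta - \alpha\psi(0))/(p_1+1)$ supplied by \eqref{2m} into $W(0) > G_{\max}$, multiplying through by $(2\gamma-1)\tau^{2\gamma-1}$, and using $p_1+1 = 2(2\gamma-1)$ together with $\gamma - 1 = (p_1-1)/4$---collapses the inequality $W(0) > G_{\max}$ to the elementary one $\tau^{2\gamma-2} < 1$, i.e. $\tau < 1$, which holds by assumption. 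Thus $W(0) > G_{\max}$ in this regime, and if $\psi'$ vanished first at some $t_1 > 0$ the non-decrease of $W$ would give $W(t_1) \geq W(0) > G_{\max}$, contradicting $W(t_1) = G(\psi(t_1)) \leq G_{\max}$. This algebraic matching---in which the coefficient $p_1+1$ in \eqref{2m} is pinned down precisely by the identity $p_1+1 = 2(2\gamma-1)$---is the main technical sticking point.

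Consequently $\psi$ is strictly increasing on $[0,\infty)$; if $\psi \to L < \infty$, then $\psi'\to 0$, and in the regime $\tau < 1$ one obtains $W(t) \to G(L) \leq G_{\max}$, contradicting $W(t) \geq W(0) > G_{\max}$, while in the regime $\tau \geq 1$ one has $L \geq \psi(0) \geq \beta/\alpha$ and $\psi'' \to \alpha L - \beta \geq 0$, with the degenerate equilibrium $L = \beta/\alpha$ ruled out by the strict form of \eqref{2m}, producing a strictly positive asymptotic lower bound for $\psi''$ that is incompatible with $\psi'\to 0$. Hence $\psi(t) \to \infty$; an application of Lemma~\ref{lm2} now yields $T_m < \infty$, contradicting $T_m = \infty$, and completing the proof.
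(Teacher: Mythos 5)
Your argument is correct, but it takes a genuinely different route from the paper's at the central step. Both proofs assume $T_m=\infty$, reduce to $\psi(t)=\|u(t)\|^2$ satisfying \eqref{12} with $\gamma=\frac{p_1+3}{4}$, $\alpha=\mathcal{C}(p_1-1)$, $\beta=2(p_1+1)E(0)$ and the same nonnegative remainder $H$, and finish with Lemma~\ref{lm2}; the difference is how one shows $\psi$ is unbounded. The paper works at the PDE level: it proves that $\psi$ and $\phi(t)=(u(t),u_t(t))^2/\|u(t)\|^2$ are increasing as long as $I(u(t))<0$ (Step~1), bootstraps via the identity \eqref{3m} to show the solution stays in the unstable set $\{I<0\}$ for all time (Step~2), and then reads off from \eqref{51} the uniform bound $\psi''(t)\ge \mathcal{C}(p_1-1)\|u_0\|^2+(p_1+1)(u_0,u_1)^2/\|u_0\|^2-2(p_1+1)E(0)>0$, which is exactly where the constant in \eqref{2m} enters. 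You instead stay entirely at the ODE level, introducing the first integral $W=\psi'^2\psi^{-2\gamma}+G(\psi)$ of the comparison equation and showing that $W(0)>G_{\max}$ is precisely equivalent, via $p_1+1=2(2\gamma-1)$ and $\gamma-1=(p_1-1)/4$, to \eqref{2m} in the regime $\tau=\alpha\|u_0\|^2/\beta<1$; I checked that both the identity $W'=2\psi'H\psi^{-2\gamma-1}$ and the collapse of $W(0)>G_{\max}$ to $\tau<1$ are correct. Your version is more portable --- it applies to any $\psi$ obeying \eqref{12} with $\psi(0)>0$, $\psi'(0)\ge 0$ and your initial inequality, without reference to the Nehari functional --- and in the regime $\tau<1$ it even gives finite-time blow-up directly from $\psi'\ge (W(0)-G_{\max})^{1/2}\psi^{\gamma}$ with $\gamma>1$, bypassing Lemma~\ref{lm2}; the price is the case split on $\tau$ and the bookkeeping around the degenerate equilibrium. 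The paper's version additionally yields $I(u_0)<0$ and the invariance of $\{I<0\}$, which it reuses in Proposition~\ref{pr3}. One small repair to your final step: a bounded increasing $\psi$ need not satisfy $\psi'\to 0$, only $\liminf_{t\to\infty}\psi'(t)=0$; evaluating $W$ along a subsequence realizing this liminf (and, for $\tau\ge 1$, noting that strict monotonicity forces $L>\beta/\alpha$ and hence $\psi''\ge\alpha\psi(T)-\beta>0$ eventually) closes this without difficulty.
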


\begin{proof}

\emph{Step 1:}
First, we will prove that the functions
$$\psi(t)=\|u(t)\|^2 ~~~\text{and}~~~ \phi(t)=\frac{(u(t),u_t(t))^2}{\|u(t)\|^2}=\frac{1}{4}\frac{(\psi'(t))^2}{\psi(t)}$$
 are strictly increasing  for $t\in(0,T)$, $T\leq T_m$, provided $I(u(t))<0$ for $t\in[0,T)$, i.e.
 $\psi'(t)>0$ and $\phi'(t)>0$ for $t\in(0,T)$. 

From \eqref{52} and the condition $\psi'(0)=2(u_0(x),u_1(x))\geq 0$ it follows that $\psi''(t)>0$ for $t\in[0,T)$, i.e. $\psi'(t)$ is strictly increasing and $\psi'(t)>0$ for $t\in (0,T)$.

For $\phi'(t)$ we get
\be\label{1m}
\phi'(t)=\frac{d}{dt} \left(\frac{1}{4}\frac{(\psi'(t))^2}{\psi(t)} \right)=\frac{1}{4}\frac{\psi'(t)}{\psi^2(t)} \left[2\psi(t) \psi''(t)-(\psi'(t))^2\right].
\ee
Since
$$
2\psi(t) \psi''(t)-(\psi'(t))^2=4\left[\|u(t)\|^2 \|u_t(t)\|^2 - (u(t),u_t(t))^2\right] -2 \|u(t)\|^2 I(u(t))> 0$$
and $\psi'(t)>0$ for $t\in (0,T)$, formula \eqref{1m} yields  $\phi'(t)>0$ for $t\in(0,T)$. Therefore, we get that $\psi(t)$ and $\phi(t)$ are strictly increasing functions in $(0,T)$ as long as $I(u(t))<0$.

\emph{Step 2:} Now we will prove that if the initial data $u_0(x)$ and $u_1(x)$ satisfy \eqref{2m}, then $I(u(t))<0$ for every $t\in[0,T_m)$.
From \eqref{EJ} and \eqref{IJ} we have the identity
\be\label{3m}
\begin{split}
\frac{I(u(t))}{p_1+1}=&E(t)-\frac{\mathcal{C}(p_1-1)}{2(p_1+1)}\|u(t)\|^2 - \frac{1}{2}
\frac{(u(t),u_t(t))^2}{\|u(t)\|^2}\\
 &+
\frac{(p_1-1)}{2(p_1+1)}(\mathcal{C} \|u(t)\|^2 -\|\nabla u(t)\|^2)\\
&+\frac{1}{2} \left(\frac{(u(t),u_t(t))^2}{\|u(t)\|^2} - \|u_t(t)\|^2\right)
-B(u(t)).
\end{split}
\ee
By means of \eqref{2m} and \eqref{poin1}, it is easy to obtain that $I(u_0)<0$.
Assume, for sake of contradiction, that $t_0$ is the first time such that $I(u(t_0))=0$ and $I(u(t))<0$ for 
$t\in[0,t_0)$. From the continuity and monotonicity of $\psi(t)$ and $\phi(t)$ in $(0,t_0]$, \eqref{2m} and \eqref{3m}, we get the following impossible chain of inequalities:
\begin{align*}
0=\frac{I(u(t_0))}{p_1+1}\leq &E(0)-\frac{\mathcal{C}(p_1-1)}{2(p_1+1)}\|u(t_0)\|^2 - \frac{1}{2}
\frac{(u(t_0),u_t(t_0))^2}{\|u(t_0)\|^2}\\
 & \leq
E(0)-\frac{\mathcal{C}(p_1-1)}{2(p_1+1)}\|u_0\|^2 - \frac{1}{2}
\frac{(u_0,u_1)^2}{\|u_0\|^2}<0.
\end{align*}
Therefore,  $I(u(t))<0$ for every $t\in[0,T_m)$.

\emph{Step 3:}
Analogously to the proofs of Theorem~\ref{th4} and  Theorem~\ref{th6}, we assume that $u(t,x)$ is globally defined. Using \eqref{51},  we have
$$
\psi''(t)=\mathcal{C}(p_1-1)\|u(t)\|^2 +(p_1+1)
\frac{(u(t),u_t(t))^2}{\|u(t)\|^2 }
-2(p_1+1)E(0) + S(u(t)),
$$
where
\begin{align*}
S(u(t))=&(p_1+1)\left(\|u_t(t)\|^2 - \frac{(u(t),u_t(t))^2}{\|u(t)\|^2}\right) \\
&+(p_1-1) (\| \nabla u(t)\|^2 - \mathcal{C}\|u(t)\|^2)  + 2 \|u_t(t)\|^2 + 2(p_1+1) B(u(t)) \geq 0.
\end{align*}
Step~1, Step~2,  and \eqref{2m}  give us
$$
\psi''(t)
\geq \mathcal{C}(p_1-1)\|u_0\|^2 +(p_1+1)
\frac{(u_0,u_1)^2}{\|u_0\|^2 }
-2(p_1+1)E(0)>0.
$$
Similar to the proof of Theorem~\ref{th4}, we obtain that $\lim_{t\to\infty} \psi(t)=\infty$.

\emph{Step 4:}
In the proof of Theorem~\ref{th6} it is shown that $\psi(t)$ satisfies the conditions of Lemma~\ref{lm2}, which means that $\psi(t)$, as well $u(t,x)$, blows up for a finite time. This contradicts our assumption $T_m=\infty$.
Theorem~\ref{th7} is proved.
\end{proof}

\subsection{Existence of initial data satisfying the conditions of Theorem~\ref{th6}  and Theorem~\ref{th7}}

First, we demonstrate that the set of initial data that satisfy  conditions of Theorem~\ref{th6}  or Theorem~\ref{th7} is not empty, i.e.
we construct explicitly initial data  with arbitrary positive energy, for which  assumption \eqref{56} or \eqref{2m} holds. 

For simplicity, we consider the following  single power nonlinearity with a variable sign-changing coefficient
\be\label{sn}
f(x,u)=a_1(x) |u|^{p_1-1} u,  
\ee
where
$$
1<p_1<\infty \ \text{if} \ n=1,2;  \quad 1 < p_1 \leq \frac{n+2}{n-2} \ \text{if} \ n\ge 3, 
$$
$a_1(x)\in\mathrm{C}(\overbar{\Omega})$ and  
$a_1(x)$ is a sign-changing function in $\Omega$.
Let us note that \eqref{sn} is  a special case of nonlinearity $(F1)$.

\begin{prop}\label{pr2}
For every positive constant $K$ there exist infinitely many initial data $u_0^K(x)$, $u_1^K(x)$ that have energy $E(u_0^K, u_1^K)=K$ and satisfy condition \eqref{56}. Moreover, problem \eqref{1}--\eqref{2}, \eqref{sn} with initial data $u_0^K(x)$, $u_1^K(x)$ has no global in time weak solution. 
\end{prop}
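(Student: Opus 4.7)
The plan is to construct the required initial data via the two-parameter ansatz $u_0^K(x) = s\, w_0(x)$ and $u_1^K(x) = \mu\, u_0^K(x)$, with parameters $s, \mu > 0$ to be chosen after $K$. First, I would fix $w_0$: since $a_1(x)$ is sign-changing on $\Omega$, there exists a subdomain $\Omega_1 \subset \Omega$ on which $a_1 > 0$, and I pick any nontrivial $w_0 \in \mathrm{H}_0^1(\Omega_1)$, extended by zero to $\Omega$, so that $\kappa := \int_\Omega a_1(x) |w_0(x)|^{p_1+1}\,dx > 0$.

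With this ansatz one computes $(u_0^K, u_1^K) = \mu s^2 \|w_0\|^2 > 0$, $\|u_0^K\|^2 = s^2\|w_0\|^2$, and
\[
E(u_0^K, u_1^K) = \frac{s^2\mu^2}{2}\|w_0\|^2 + C(s), \qquad C(s) := \frac{s^2}{2}\|\nabla w_0\|^2 - \frac{s^{p_1+1}}{p_1+1}\kappa.
\]
Since $p_1 + 1 > 2$ and $\kappa > 0$, the function $C(s)$ tends to $-\infty$ as $s \to \infty$. Consequently, for each fixed $K > 0$ and every sufficiently large $s$, the equation $E(u_0^K, u_1^K) = K$ can be solved for a unique $\mu = \mu(s) > 0$, namely $\mu(s) = \sqrt{2(K - C(s))/(s^2\|w_0\|^2)}$.

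It remains to verify condition \eqref{56}. Its right-hand side, evaluated on the pair $(u_0^K, u_1^K)$, equals
\[
\frac{\mathcal{C}(p_1-1)}{2(p_1+1)} s^2\|w_0\|^2 + \frac{\sqrt{\mathcal{C}(p_1-1)}}{p_1+1} \mu(s) s^2\|w_0\|^2,
\]
whose first summand alone grows like $s^2$ while $K$ stays fixed. Hence for every $s \ge s_*(K)$ large enough the right-hand side exceeds $K$ by the first term alone, and the second summand (nonnegative since $\mu(s)>0$) only reinforces the strict inequality. Therefore, for every such $s$ the pair $(u_0^K, u_1^K)$ satisfies both $E(u_0^K, u_1^K) = K$ and \eqref{56}, giving a one-parameter continuum of admissible initial data, in particular infinitely many. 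The nonexistence of global in time weak solutions then follows at once from Theorem~\ref{th6}.

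The only delicate point is to coordinate the two requirements — well-definedness and positivity of $\mu(s)$ on the one hand, and validity of \eqref{56} on the other — on a common range of $s$. Both, however, are driven by the same dominant term $-\frac{s^{p_1+1}}{p_1+1}\kappa$ in $C(s)$; this is exactly where the sign-changing hypothesis on $a_1$ enters, ensuring that the nonlinear contribution to the energy can be made arbitrarily negative, which simultaneously allows $\mu(s)$ to exist and makes the term linear in $\|u_0^K\|^2$ on the right-hand side of \eqref{56} blow up. Varying $w_0$ inside $\Omega_1$ provides additional families, reinforcing the "infinitely many" assertion.
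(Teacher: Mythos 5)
Your proposal is correct and follows essentially the same route as the paper: fix a profile supported where $a_1>0$ so that the superquadratic term $-\frac{s^{p_1+1}}{p_1+1}\kappa$ drives the potential energy to $-\infty$, use the velocity amplitude as the free knob to hit the prescribed energy level $K$, and observe that the right-hand side of \eqref{56} grows like $s^2$ while $K$ stays fixed. The only (inessential) difference is that the paper writes $u_1^K=\mu\sigma w+\eta v$ with $(w,v)=0$ and tunes the energy through the orthogonal component $\eta v$, whereas you take $u_1^K$ proportional to $u_0^K$ and tune through the proportionality constant; both choices make the kinetic term absorb $K-C(s)$ and leave the verification of \eqref{56} unchanged.
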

\begin{proof}
Let  $w(x)\in \mathrm{H}_0^1(\Omega)$ and $v(x)\in \mathrm{L}^2(\Omega)$ be fixed functions satisfying
\be
\|\nabla w\| \ne 0,  \ \|v\| \ne 0, \  (w,v)=0, \ \int_{\Omega} a_1(x) |w(x)|^{p_1+1} \, d x>0.
\label{4.1}
\ee
Let $K$ be an arbitrary positive constant.
We construct  initial data $u_0^K$, $u_1^K$ in the following way:
\be\label{4.2}
u_0^K(x)=  \mu w(x), \quad u_1^K(x)= \mu \sigma w(x) +  \eta v(x),
\ee
where 
$$
\sigma \in \left(-\frac{\sqrt{\mathcal{C}(p_1-1)}}{2},\infty\right) \setminus \{0\}
$$
and 
the constants $\mu>0$ and $\eta >0$ will be chosen below.

Straightforward  computations give us the following formulas for  energy $E(0)$ and different norms of $u_0^K$ and $u_1^K$ in terms of the norms of  $w$ and $v$:
\begin{align*}
& \|u_0^K\|^2= \mu ^2\|w\|^2,~~
\|u_1^K\|^2= \mu ^2 \sigma^2 \|w\|^2 + \eta^2 \|v\|^2,\\
&(u_0^K,u_1^K)=\mu^2 \sigma \|w\|^2, ~~
\|\nabla u_0^K\|^2= \mu^2 \|\nabla w\|^2, \\
&E(u_0^K, u_1^K)=E(0)= R(\mu) + \frac{\eta^2}{2} \|v\|^2,~~~~\text{where}\\
&R(\mu):= \frac{\mu^2 \sigma^2}{2} \|w\|^2 + \frac{\mu^2}{2} \|\nabla w \|^2
- \frac{\mu^{p_1+1}}{p_1+1} \int_{\Omega} a_1(x) |w(x)|^{p_1+1} \, d x.
\end{align*}
Let us denote by $\mu_0$ the unique positive root of the equation $R(\mu)=0$, i.e.
$$
\mu_0=\left(\frac{(p_1+1) (\sigma^2 \|w\|^2+\|\nabla w\|^2)}{2\int_{\Omega} a_1(x) |w(x)|^{p_1+1} \, d x}\right)^{\frac{1}{p_1-1}}.
$$
Note that for all $\mu>\mu_0$ the inequality $R(\mu) <0$ holds.

From \eqref{4.1} and \eqref{4.2} it follows   that the sign of $(u_0^K, u_1^K)$  coincides with the sign of the constant $\sigma$.
Now, we  set $\mu=\mu_1$ and $\eta=\eta_1$, where
$$
\mu_1 > \max\left\{\mu_0, \frac{\sqrt{K}}{\sqrt{\frac{\sqrt{\mathcal{C}(p_1-1)}}{(p_1+1)} \left(\frac{\sqrt{\mathcal{C}(p_1-1)}}{2} + \sigma\right)}} \|w\|^{-1}\right\}, \ \ 
\eta_1=\sqrt{2(K-R(\mu_1))} \|v\|^{-1}.
$$
The constant $\eta_1$ is well defined since $R(\mu_1)<0$. 

Let us check that the initial data \eqref{4.2} with the above choice of the constants $\mu_1$ and $\eta_1$ satisfy condition \eqref{56} and the initial energy is equal to the constant K. 
Indeed, 
$$
E(0)=R(\mu_1)+ \frac{\eta_1^2}{2} \|v\|^2=K
$$
and
\begin{gather*}
\frac{\mathcal{C}(p_1-1)}{2(p_1+1)}\|u_0\|^2+\frac{\sqrt{\mathcal{C}(p_1-1)}}{(p_1+1)}(u_0,u_1)\\
=\mu_1^2 \frac{\sqrt{\mathcal{C}(p_1-1)}}{(p_1+1)} \left(\frac{\sqrt{\mathcal{C}(p_1-1)}}{2} + \sigma\right)\|w\|^2 > K=E(0).
\end{gather*}
Thus initial data \eqref{4.2}, with already chosen constants $\mu_1$ and  $\eta_1$,  satisfy  all conditions
 of Theorem~\ref{th6}. Moreover, these  initial data have arbitrary  positive energy $E(0)= K$.
In this way if we take $K>d$ we find a wide class of initial data \eqref{4.2} with super-critical energy $E(0)>d$ for which the blow up result of Theorem~\ref{th6} is valid.
\end{proof}

\begin{prop} \label{pr1}
For every positive constant $K$ there exist infinitely many initial data $u_0^K(x)$, $u_1^K(x)$ that have energy $E(u_0^K, u_1^K)=K$ and satisfy condition \eqref{2m}. Moreover, problem \eqref{1}--\eqref{2}, \eqref{sn} with initial data $u_0^K(x)$, $u_1^K(x)$ has no global in time weak solution. 
\end{prop}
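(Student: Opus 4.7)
The plan is to follow the construction used in the proof of Proposition~\ref{pr2}, modifying only the admissible range of the auxiliary parameter $\sigma$ and the lower bound on $\mu$ so that the right-hand side of the inequality in~\eqref{2m} (rather than the one in~\eqref{56}) strictly exceeds the prescribed energy level $K$. As before, I would fix functions $w\in \mathrm{H}_0^1(\Omega)$ and $v\in \mathrm{L}^2(\Omega)$ satisfying the compatibility conditions~\eqref{4.1}, and look for initial data of the form $u_0^K=\mu w$, $u_1^K=\mu\sigma w+\eta v$, with $\sigma\geq 0$ and positive constants $\mu,\eta$ to be determined. The orthogonality $(w,v)=0$ then forces $(u_0^K,u_1^K)=\mu^2\sigma\|w\|^2\geq 0$, so the sign requirement in~\eqref{2m} is built into the ansatz, and $\|u_0^K\|=\mu\|w\|\neq 0$ automatically.

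The next step is to rewrite~\eqref{2m} explicitly in the chosen parameters. Using $(u_0^K,u_1^K)^2/\|u_0^K\|^2=\mu^2\sigma^2\|w\|^2$, the upper-bound side of~\eqref{2m} becomes
\[
\mu^2\|w\|^2\left(\frac{\mathcal{C}(p_1-1)}{2(p_1+1)}+\frac{\sigma^2}{2}\right),
\]
while the energy identity $E(u_0^K,u_1^K)=R(\mu)+\tfrac{\eta^2}{2}\|v\|^2$ and the unique positive root $\mu_0$ of $R(\mu)=0$ are identical to those in Proposition~\ref{pr2}, since they do not depend on the value of $\sigma$. Given $K>0$ and a fixed $\sigma\geq 0$, I would then select
\[
\mu_1>\max\left\{\mu_0,\ \frac{\sqrt{K}}{\|w\|}\left(\frac{\mathcal{C}(p_1-1)}{2(p_1+1)}+\frac{\sigma^2}{2}\right)^{-1/2}\right\},
\]
and define $\eta_1=\sqrt{2(K-R(\mu_1))}\,\|v\|^{-1}$, which is real and positive because $\mu_1>\mu_0$ implies $R(\mu_1)<0$, hence $K-R(\mu_1)>K>0$. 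Direct substitution then yields $E(u_0^K,u_1^K)=K$ together with the strict inequality in~\eqref{2m}.

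Finally, all hypotheses of Theorem~\ref{th7} being satisfied, that theorem produces the desired nonexistence of a global-in-time weak solution. Since $\sigma$ ranges freely over $[0,\infty)$ and $\mu_1$ over a half-line (with $\eta_1$ determined so as to pin $E(0)$ to $K$), the construction supplies infinitely many admissible pairs $(u_0^K,u_1^K)$; picking $K>d$ yields super-critical examples. I do not foresee a substantive obstacle beyond those already handled for Proposition~\ref{pr2}: the only delicate check is the mutual compatibility of the two lower bounds on $\mu_1$, which is automatic because $R$ is negative on $(\mu_0,\infty)$ and tends to $-\infty$ as $\mu\to\infty$ thanks to the dominant $-\mu^{p_1+1}$ contribution with $p_1>1$.
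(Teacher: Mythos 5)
Your proposal is correct and takes essentially the same route as the paper's proof, which simply fixes $\sigma=1$ in the ansatz \eqref{4.2} rather than letting $\sigma$ range over $[0,\infty)$. The only slip is your remark that $R(\mu)$ and $\mu_0$ do not depend on $\sigma$ --- they do, through the term $\frac{\mu^2\sigma^2}{2}\|w\|^2$ --- but this is harmless since the argument only uses that $R(\mu)<0$ for $\mu>\mu_0$, where $\mu_0$ is the root associated with the chosen $\sigma$.
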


\begin{proof}
The proof is similar to the proof of Proposition~\ref{pr2}. 
We choose initial data $u_0^K$, $u_1^K$ by formulas \eqref{4.2} and set $\sigma=1$.

It is easy to see that $(u_0^K, u_1^K)>0$  for every $\mu >0$ and $\eta >0$. 
We fix  constants  $\mu$ and $\eta$ in \eqref{4.2} as follows:
$$
\mu=\mu_2 > \max\left\{\mu_0, \frac{\sqrt{2K}}{\sqrt{\frac{\mathcal{C}(p_1-1)}{p_1+1}+1}} \|w\|^{-1}\right\}, \ \ 
\eta=\eta_2=\sqrt{2(K-R(\mu_1))} \|v\|^{-1}.
$$
For  initial data \eqref{4.2} with  already chosen  constants $\mu_2$ and $\eta_2$  we have 
$$
E(u_0^K, u_1^K)=E(0)= R(\mu_2) + \frac{\eta_2^2}{2} \|v\|^2=K,
$$
and
condition \eqref{2m} holds, i.e. 
$$
\frac{\mathcal{C}(p_1-1)}{2(p_1+1)}\|u_0\|^2+\frac{1}{2}\frac{(u_0,u_1)^2}{\|u_0\|^2}=
\frac{\mu_2^2}{2}\left(\frac{\mathcal{C}(p_1-1)}{p_1+1}+1\right)\|w\|^2 > K=E(0).
$$
Hence initial data \eqref{4.2} with $\sigma=1$, $\mu=\mu_2$ and  $\eta=\eta_2$  satisfy  all conditions
 of Theorem~\ref{th7} and have arbitrary  positive energy $E(0)= K$.
According to Theorem~\ref{th7}, problem \eqref{1}--\eqref{2}, \eqref{sn} with initial data $u_0^K(x)$, $u_1^K(x)$ has no global in time weak solution. 
\end{proof}

\subsection{Comparison of the sufficient conditions for nonexistence of global solutions}

Finally, we compare  sufficient condition  \eqref{2m} in Theorem~\ref{th7} with conditions \eqref{Straughan-e}, \eqref{Gazzola-e} and \eqref{Xu-e}. More precisely, we prove that 
condition  \eqref{2m} is more general than \eqref{Straughan-e}, \eqref{Gazzola-e} and \eqref{Xu-e}. 
\begin{prop}\label{pr3}
If  
initial data $u_0$, $u_1$ satisfy one of the conditions \eqref{Straughan-e}, \eqref{Gazzola-e} and \eqref{Xu-e}, then they satisfy  condition \eqref{2m}. 
Moreover,  for these initial data, problem \eqref{1}--\eqref{2} with nonlinearity $(F1)$ or $(F2)$  has no global in time weak solution.
\end{prop}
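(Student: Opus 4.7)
The plan is to show that, when $E(0)>0$, each of \eqref{Straughan-e}, \eqref{Gazzola-e}, \eqref{Xu-e} implies the three requirements of \eqref{2m}, so that Theorem~\ref{th7} yields the non-existence of global weak solutions; for the complementary range $E(0)\leq 0$, which lies outside the scope of \eqref{2m}, Theorem~\ref{th4} applies directly. The membership $u_0\in V$ required by Theorem~\ref{th4} in the case $E(0)=0$ follows either from the explicit assumption $I(u_0)<0$ (in \eqref{Gazzola-e} and \eqref{Xu-e}) or from Lemma~\ref{E0} combined with $\|\nabla u_0\|\neq 0$ (in \eqref{Straughan-e}, where $\|u_0\|\neq 0$ is forced by $(u_0,u_1)>0$).

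Among the conditions in \eqref{2m}, the non-negativity of $(u_0,u_1)$ is explicit in all three hypotheses, and $\|u_0\|\neq 0$ follows from $(u_0,u_1)>0$ in \eqref{Straughan-e} and \eqref{Xu-e}, and from $I(u_0)<0$ together with Lemma~\ref{lm4}(ii) in \eqref{Gazzola-e}. Thus only the upper bound on $E(0)$ requires comparison.

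For \eqref{Straughan-e} and \eqref{Gazzola-e} the comparison is immediate: the right-hand side of \eqref{2m} is precisely the sum $\tfrac{\mathcal{C}(p_1-1)}{2(p_1+1)}\|u_0\|^2+\tfrac{1}{2}(u_0,u_1)^2/\|u_0\|^2$, whose first summand is the \eqref{Gazzola-e}-bound and whose second summand is the \eqref{Straughan-e}-bound. Since both summands are non-negative, each of these two hypotheses bounds $E(0)$ by something no larger than the right-hand side of \eqref{2m}.

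The substantive step is showing that \eqref{Xu-e} implies the bound in \eqref{2m}. Writing $y:=(u_0,u_1)>0$ and $m:=\|u_0\|^2>0$, the desired inequality $\tfrac{\mathcal{C}(p_1-1)}{(1+\mathcal{C})(p_1+1)}\,y\leq\tfrac{\mathcal{C}(p_1-1)}{2(p_1+1)}\,m+\tfrac{y^2}{2m}$ rearranges to the quadratic condition
\[
y^2\;-\;\frac{2\mathcal{C}(p_1-1)\,m}{(1+\mathcal{C})(p_1+1)}\,y\;+\;\frac{\mathcal{C}(p_1-1)}{p_1+1}\,m^2\;\geq\;0,
\]
whose discriminant is a positive multiple of $\mathcal{C}(p_1-1)-(1+\mathcal{C})^2(p_1+1)$; this quantity is strictly negative because $(1+\mathcal{C})^2\geq \mathcal{C}$ and $p_1+1>p_1-1$, so the quadratic is positive for every real $y$. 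This discriminant check is the only non-routine ingredient of the proof; the remainder is a case split between Theorem~\ref{th7} (for $E(0)>0$) and Theorem~\ref{th4} (for $E(0)\leq 0$), which immediately yields the no-global-solution conclusion.
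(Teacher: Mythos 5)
Your proposal is correct and follows essentially the same route as the paper: the bounds in \eqref{Straughan-e} and \eqref{Gazzola-e} are each a single summand of the right-hand side of \eqref{2m}, and the comparison with \eqref{Xu-e} reduces to the positivity of a quadratic in $(u_0,u_1)$, which you verify by a discriminant computation equivalent to the paper's completion of the square (both hinge on $(1+\mathcal{C})^2(p_1+1)>\mathcal{C}(p_1-1)$). Your explicit handling of $\|u_0\|\neq 0$ and of the degenerate range $E(0)\le 0$ via Theorem~\ref{th4} is slightly more careful than the paper's, but does not change the argument.
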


\begin{proof}
In the proof of Theorem~\ref{th7} we show that $I(u_0)<0$ when initial data satisfy \eqref{2m}. Taking this fact into account, we  can easily conclude
that condition \eqref{2m} is more general than conditions \eqref{Straughan-e} and \eqref{Gazzola-e}.
Now we demonstrate that 
if the initial data $u_0$,  $u_1$ satisfy \eqref{Xu-e}, then 
\eqref{2m}  necessarily holds. The proof of this statement follows 
from the following chain of inequalities:
\begin{gather*}
\frac{\mathcal{C}(p_1-1)}{2(p_1+1)}\|u_0\|^2 + \frac{1}{2}\frac{(u_0,u_1)^2}{\|u_0\|^2}=\\
\frac{1}{2}\left( \frac{(u_0,u_1)}{\|u_0\|}  - \frac{\mathcal{C}(p_1-1)}{(1+\mathcal{C})(p_1+1)} \|u_0\| \right)^2 +\frac{\mathcal{C}(p_1-1)}{(1+\mathcal{C})(p_1+1)} (u_0,u_1) \\
+\frac{\mathcal{C}(p_1-1)}{2(p_1+1)} \left( 1-\frac{\mathcal{C}(p_1-1)}{(1+\mathcal{C})^2(p_1+1)} \right) \|u_0\|^2 >
\frac{\mathcal{C}(p_1-1)}{(1+\mathcal{C})(p_1+1)} (u_0,u_1).
\end{gather*}
Hence, condition \eqref{2m} holds for all initial data $u_0$, $u_1$, satisfying one of the conditions \eqref{Straughan-e}, \eqref{Gazzola-e} or \eqref{Xu-e}. According to Theorem~\ref{th7} 
problem \eqref{1}--\eqref{2} with nonlinearity $(F1)$ or $(F2)$  has no global in time weak solution 
$u(t,x)$. The proof of Proposition~\ref{pr1} is completed.
\end{proof}

Now, we can conclude that if the initial energy satisfy conditions $ (u_0, u_1)>0$,
\be\label{in-1}
E(0)>\max
\left\{\frac{\mathcal{C}(p_1-1)}{2(p_1+1)}\|u_0\|^2, \frac{1}{2}\frac{(u_0,u_1)^2}{\|u_0\|^2}, \frac{\mathcal{C}(p_1-1)}{(1+\mathcal{C})(p_1+1)} (u_0,u_1)
\right \} 
\ee
and
$$
E(0)
<\frac{\mathcal{C}(p_1-1)}{2(p_1+1)}\|u_0\|^2+\frac{1}{2}\frac{(u_0,u_1)^2}{\|u_0\|^2},
$$
then  the result in Theorem~\ref{th7} is completely new. In this way, we obtain a wider class of initial data for which the problem has no global in time solution.

In the following \emph{Example} we 
illustrate Proposition~\ref{pr3} for single nonlinearity \eqref{sn}.

\emph{{Example.}}\label{ex1}
We consider problem \eqref{1}--\eqref{2}, \eqref{sn} with 
initial data of type \eqref{4.2}, where the constant $\sigma$ is set to 1.
Our aim is to choose
constants  $\mu$, $\eta$   and $K$ so  that both conditions \eqref{in-1} and \eqref{2m} hold for $E(0)=K$.
Using the special choice of the initial data \eqref{4.2} with $\sigma=1$, we get
\begin{align*}
&\max\left\{\frac{\mathcal{C}(p_1-1)}{2(p_1+1)}\|u_0\|^2, \frac{1}{2}\frac{(u_0,u_1)^2}{\|u_0\|^2}, \frac{\mathcal{C}(p_1-1)}{(1+\mathcal{C})(p_1+1)} (u_0,u_1)
\right \}= \\
 &\max\left\{ \frac{\mu^2}{2} \frac{\mathcal{C}(p_1-1)}{(p_1+1)} \|w\|^2,  
\frac{\mu^2}{2} \|w\|^2, 
\mu^2 \frac{\mathcal{C}(p_1-1)}{(1+\mathcal{C})(p_1+1)} \|w\|^2\right\}=
\frac{\mu^2}{2} \mathcal{M} \|w\|^2,
\end{align*}
$$
\frac{\mathcal{C}(p_1-1)}{2(p_1+1)}\|u_0\|^2+\frac{1}{2}\frac{(u_0,u_1)^2}{\|u_0\|^2}=
\frac{\mu^2}{2}\mathcal{L}\|w\|^2,
$$
where
$$
\mathcal{M}=\max\left\{\frac{\mathcal{C}(p_1-1)}{(p_1+1)}, 1, \frac{2\mathcal{C}(p_1-1)}{(1+\mathcal{C})(p_1+1)}\right\}, \ \ 
\mathcal{L}=\frac{\mathcal{C}(p_1-1)}{p_1+1}+1.
$$
The requirement that the initial data with $E(0)=K$ simultaneously satisfy both conditions \eqref{in-1} and \eqref{2m}, leads to the following
inequalities:
\be
 \frac{\mu^2}{2} \mathcal{M} \|w\|^2<K=R(\mu)+\frac{\eta^2}{2}\|v\|^2 <\frac{\mu^2}{2}\mathcal{L}\|w\|^2.\label{4.10}
\ee
First, we prove that $\mathcal{M}<\mathcal{L}$. It is enough to show that inequality
\be\label{4.20}
\frac{2\mathcal{C}(p_1-1)}{(1+\mathcal{C})(p_1+1)} <\frac{\mathcal{C}(p_1-1)}{p_1+1}+1
\ee
holds regardless of the value of the constant $\mathcal{C}$.
Indeed, inequality \eqref{4.20} is equivalent to
$$
1>\frac{\mathcal{C} (1-\mathcal{C})(p_1-1)}{(1+\mathcal{C})(p_1+1)},
$$
which is satisfied for every $\mathcal{C}$.
Now, we evaluate $K_0=\frac{\mu_0^2}{2} \frac{(\mathcal{M}+\mathcal{L})}{2} \|w\|^2$.
Then for every constant $K> K_0$, we define $\mu_3=  (2K)^{1/2} \left(\frac{2}{\mathcal{M}+\mathcal{L}}\right)^{1/2}\|w\|^{-1}$,  which guarantees that the inequalities in \eqref{4.10} hold. 
Finally, given that $\mu_3>\mu_0$,  we set the
constant $\eta_3=((2K - R(\mu_3))^{1/2} \|v\|^{-1}$, which gives
the equality $E(0)=E(u^{K}_0,u_1^{K})=K=R(\mu_3)+\frac{\eta_3^2}{2}\|v\|^2$.

In this way we construct initial data $u^{K}_0,u_1^{K}$, defined by \eqref{4.2} with constants $\sigma=1$, $\mu=\mu_3$ and $\eta=\eta_3$, that have sufficiently large positive energy $E(u^{K}_0,u_1^{K})=K>K_0$, satisfy all conditions of Theorem~\ref{th7} and for which the assumptions  \eqref{Straughan-e},  \eqref{Gazzola-e} and \eqref{Xu-e} are not fulfilled.

\section*{Acknowledgments}
The authors
 were partially funded
 by
Grant No BG05M2OP001-1.001-0003, financed by the Science and Education for Smart Growth Operational Program (2014-2020) and co-financed by the European Union through the European structural and Investment funds.
Moreover, the research of the second  author  was partially supported by the Bulgarian Science Fund under 
Grant K$\Pi$-06-H22/2.

\appendix
\section{Existence and uniqueness of local solution}
In this section we will consider the
wave equation \eqref{1}--\eqref{2} with the following nonlinearity 
\be\label{3.1}
f(x,u)= \sum_{i=1}^l d_i(x)|u|^{p_i} + \sum_{i=l+1}^m d_i(x) |u|^{p_i-1}u ,
\ee
where every exponent $p_i$, $i=1,...,m$ satisfies inequalities 
\be\label{3.2}
\begin{split}
&1<p_i<\infty,  \ \ \text{if } n=1,2;\\
&1< p_i <\frac{n}{n-2},  \ \ \text{if }n\geq 3.
\end{split}
\ee
Moreover, we suppose that 
\be\label{3.3}
\begin{split}
&d_i(x)\in\mathrm{C}(\overbar{\Omega}),  ~~~i=1,\ldots,m, \quad
|d_i(x)|\leq D, ~~ \forall x\in \overbar{\Omega},  \ i=1,...,m,\\
&\sum_{i=1}^m|d_i(x)|\not\equiv 0 \ \text{in} \ \Omega.    
\end{split}
\ee
Let us note that  nonlinearity \eqref{3.1} is more general than  $(F1)$  and $(F2)$ because there is no sign restrictions on the coefficients $d_i(x)$, $i=1,...,m$ and the power exponents $p_i$, $i=1,...,m$ are not ordered.

For a given $T>0$ we introduce the Banach space
$$
\mathbb{H}=\{  u\in \mathrm{C}([0,T];\  \mathrm{H}_0^1(\Omega))\cap \mathrm{C}^1([0,T]; \mathrm{L}^2(\Omega))\cap \mathrm{C}^2((0,T_m); \mathrm{H}^{-1}(\Omega)) \}
$$
 equipped  with the norm
$$
\|u\|_{\mathbb{H}}^2=\max_{t\in[0,T]} (\|\nabla u(t)\|^2 +\|u_t(t)\|^2).
$$
In this section we use the short notation $\dot{\theta}(t,x)=\frac{\partial}{\partial t} \theta(t,x)$ for the derivative with respect to $t$.

In order to proof the local existence theorem
we need the following auxiliary lemma.

\begin{lem}\label{lm-th1}
Let $T$ be a fixed positive number. For a given function $u(t,x)\in \mathbb{H}$, $u_0(x)\in \mathrm{H}_0^1(\Omega)$, $u_1(x)\in \mathrm{L}^2(\Omega)$ and $f(x,u)$, defined in \eqref{3.1}--\eqref{3.3}, the problem
\be\label{22}
\begin{split}
&v_{tt} -  \Delta v = f(x,u),  \qquad \quad  t\in(0,T], ~~x \in\Omega,\\
&v(0,x)=u_0(x),~~ v_t(0,x)=u_1(x),  \qquad x \in \Omega, \\
&v(t,x)=0, \qquad \quad t\in [0,T], ~~x\in\partial\Omega
\end{split}
\ee
has a unique weak solution $$v(t,x)\in \mathrm{C}([0,T]; \mathrm{H}_0^1(\Omega))\cap \mathrm{C}^1([0,T]; \mathrm{L}^2(\Omega))\cap \mathrm{C}^2((0,T_m); \mathrm{H}^{-1}(\Omega)).$$
\end{lem}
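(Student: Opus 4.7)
The plan is to treat \eqref{22} as a linear inhomogeneous wave equation with prescribed forcing $g(t,x) := f(x, u(t,x))$, where $u \in \mathbb{H}$ is given, and then to check that $g$ is regular enough for the classical theory to apply. I would split the argument into three steps.

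\emph{Regularity of the source.} First, I would verify that $g \in \mathrm{C}([0,T]; \mathrm{L}^2(\Omega))$. From \eqref{3.1} and \eqref{3.3} one has the pointwise bound $|g(t,x)| \leq D \sum_{i=1}^m |u(t,x)|^{p_i}$. The subcritical condition \eqref{3.2}, namely $p_i < n/(n-2)$ for $n \geq 3$ (and $p_i<\infty$ for $n=1,2$), implies that $2p_i$ lies within the admissible range of exponents in \eqref{Sob}, so $\|u(t,\cdot)\|_{2p_i}^{p_i} \leq C_{2p_i}^{p_i}\|\nabla u(t,\cdot)\|^{p_i}$, whence $g(t,\cdot) \in \mathrm{L}^2(\Omega)$ with a bound controlled by $\|u\|_{\mathbb{H}}$. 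Continuity in $t$ follows from the continuity of the Nemytskii operators $w \mapsto |w|^{p_i}$ and $w \mapsto |w|^{p_i - 1}w$ from $\mathrm{H}_0^1(\Omega)$ into $\mathrm{L}^2(\Omega)$ under this same growth restriction, together with the assumption $u \in \mathrm{C}([0,T]; \mathrm{H}_0^1(\Omega))$.

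\emph{Existence and uniqueness for the linear problem.} With $g \in \mathrm{C}([0,T]; \mathrm{L}^2(\Omega))$ in hand, \eqref{22} becomes a classical linear wave equation with $\mathrm{H}_0^1 \times \mathrm{L}^2$ data and $\mathrm{L}^2$-valued continuous forcing. I would construct $v$ by the Faedo--Galerkin method using the orthonormal eigenbasis $\{w_k\}$ of $-\Delta$ in $\mathrm{H}_0^1(\Omega)$: each finite-dimensional approximation $v^N(t,x) = \sum_{k=1}^N c_k^N(t) w_k(x)$ satisfies a linear ODE system whose global solvability is immediate, and pairing the Galerkin equation with $v_t^N$ yields the uniform energy estimate
$$\|v_t^N(t)\|^2 + \|\nabla v^N(t)\|^2 \leq \|u_1\|^2 + \|\nabla u_0\|^2 + \int_0^T \|g(s,\cdot)\|^2\, ds + \int_0^t \bigl(\|v_t^N(s)\|^2 + \|\nabla v^N(s)\|^2\bigr)\, ds.$$
Gronwall's inequality and weak-$\ast$ compactness then produce a limit $v$ solving \eqref{22} in the weak sense; the energy identity (valid because $g$ is $\mathrm{L}^2$-continuous) upgrades this to $v \in \mathrm{C}([0,T]; \mathrm{H}_0^1) \cap \mathrm{C}^1([0,T]; \mathrm{L}^2)$, and reading $v_{tt} = \Delta v + g$ in $\mathrm{H}^{-1}(\Omega)$ supplies the $\mathrm{C}^2$ regularity stated. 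For uniqueness, the difference $w = v_1 - v_2$ of two solutions satisfies the homogeneous problem with zero initial data, so pairing with $w_t$ gives $\tfrac{d}{dt}(\|w_t\|^2 + \|\nabla w\|^2) = 0$ and hence $w \equiv 0$.

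The main obstacle is the first step. The restriction $p_i < n/(n-2)$ in \eqref{3.2}, which is strictly stronger than the Sobolev critical bound $(n+2)/(n-2)$ appearing elsewhere in the paper, is precisely what lifts $|u|^{p_i}$ into $\mathrm{L}^2(\Omega)$ rather than only into some $\mathrm{L}^r$ with $r<2$; it is this $\mathrm{L}^2$-integrability of the forcing that makes $g$ a legitimate datum for the classical linear wave theory and ultimately accounts for the $\mathrm{C}^2$ regularity of $v$ into $\mathrm{H}^{-1}(\Omega)$. The remaining ingredients are essentially textbook.
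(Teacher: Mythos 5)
Your proposal is correct and follows essentially the same route as the paper: a Faedo--Galerkin construction on the eigenbasis of $-\Delta$, an energy estimate obtained by testing with $v_t$ and controlling $\int_\Omega |u|^{2p_i}\,dx$ via the embedding \eqref{Sob} with exponent $2p_i$ (which is exactly where the restriction $p_i<\tfrac{n}{n-2}$ enters), Gr\"onwall plus weak-$\ast$ compactness to pass to the limit, and uniqueness from the energy identity for the difference of two solutions. Your preliminary step isolating $g=f(x,u(\cdot,\cdot))\in \mathrm{C}([0,T];\mathrm{L}^2(\Omega))$ is only a slightly more explicit packaging of the same estimates the paper performs inside the Galerkin argument.
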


\begin{proof}
We will use the Galerkin method. If $\{w_j\}$, $j=1,2,...$ is the complete orthonormal system of eigenfunctions to the problem $\Delta w+\lambda w=0$ in  $\mathrm{H}_0^1(\Omega)$, we denote by $\{\lambda_j\}$, $j=1,2,...$ the corresponding eigenvalues, i.e.
\be\label{23}
\Delta w_j+\lambda_j w_j=0, ~~~~ \|w_j\|=\left(\int_\Omega |w_j|^2\, dx\right)^{\frac{1}{2}}=1, \ \ j=1,2,\ldots .
\ee
For the space $\mathcal{W}_k$, generated by $\{ w_1, w_2,...,w_k\}$, and
$$
u_0^k(x)=\sum_{j=1}^k \left(\int_\Omega u_0(x) w_j(x)\, dx\right) w_j(x), ~~~~~~
u_1^k(x)=\sum_{j=1}^k \left(\int_\Omega u_1(x) w_j(x)\, dx\right) w_j(x)
$$
we get $u_0^k\in \mathcal{W}_k$, $\ u_1^k\in \mathcal{W}_k$, $\ u_0^k \to u_0$ in $\mathrm{H}_0^1(\Omega)$ and $\ u_1^k \to u_1$ in $\mathrm{L}^2(\Omega)$
for $k\to\infty$.

We look for functions $\gamma_1^k(t)$,...,$\gamma_k^k(t)\in\mathrm{C}^2([0,T])$ such that
\be\label{24}
v_k(t,x)=\sum_{j=1}^k \gamma_j^k(t) w_j(x)
\ee
satisfies  the problem
\be\label{25}
\begin{split}
&\int_\Omega \left(\ddot{v}_k - \Delta v_k - f(x,u)\right) \mu(x) \, dx=0, \\
&v_k(0) =u_0^k,~~~\dot{v}_k(0)=u^k_1
\end{split}
\ee
for every $\mu(x)\in \mathcal{W}_k$, $t\in[0,T]$. We take $\mu(x)=w_j(x)$ in \eqref{25} and get the identities
$$
\int_\Omega \ddot{v}_k w_j \, dx=\int_\Omega\left(\sum_{s=1}^k \ddot{\gamma}_s^k(t) w_s \right) w_j \, dx=
\ddot{\gamma}_j^k(t)\int_\Omega w_j^2 \, dx =\ddot{\gamma}_j^k(t).
$$
From \eqref{23} it follows that
\begin{align*}
-\int_\Omega \Delta v_k w_j \, dx=&-\int_\Omega \Delta \left(\sum_{s=1}^k \gamma_s^k(t) w_s \right) w_j \, dx=
-\int_\Omega  \left(\sum_{s=1}^k \gamma_s^k(t) \Delta w_s \right) w_j \, dx\\
&=\int_\Omega  \left(\sum_{s=1}^k \gamma_s^k(t) \lambda_s w_s \right) w_j \, dx
=\lambda_j \gamma_j^k(t).
\end{align*}
Hence, for every $j=1,...,k$ the function $\gamma_j^k(t)$ satisfies the Cauchy problem
\be\label{29}
\begin{split}
&\ddot{\gamma}_j^k(t)+\lambda_j \gamma_j^k(t)=h_j(t),\\
&\gamma_j^k(0)=\int_\Omega u_0 w_j \, dx,~~~\dot{\gamma}_j^k(0)=\int_\Omega u_1 w_j \, dx,
\end{split}
\ee
where
$$
h_j(t)=\int_\Omega f(x,u(t,x))w_j(x)\, dx \in \mathrm{C}^0([0,T]).
$$
Since the problem \eqref{29} has a unique solution $\gamma^k_j(t)\in \mathrm{C}^2([0,T])$, then problem \eqref{25} has a unique solution $v_k$ defined in \eqref{24}.
When $\mu=\dot{v}_k$ in \eqref{25}, after integration over $[0,t]$, $0<t\leq T$, we obtain the equality
\be\label{31}
\begin{split}
&\int_\Omega \dot{v}^2_k(t,x) \, dx + \int_\Omega | \nabla {v}_k(t,x)|^2 \, dx \\
&=\|u_1^k\|^2 +\| \nabla u_0^k \|^2 +2 \int_0^t \int_\Omega f(x,u(\tau,x)) \dot{v}_k(\tau,x) \, dx d \tau.
\end{split}
\ee
From  
H\"{o}lder's inequality and the embedding theorem \eqref{Sob},
we get the estimate
\be\label{32}
\begin{split}
&2\int_0^t \int_\Omega f(x,u(\tau,x)) \dot{v}_k(\tau,x) \, dx d\tau \leq 2D 
\int_0^t \int_\Omega \sum_{i=1}^{m} |u|^{p_i} | \dot{v}_k(\tau,x) | \, dx d\tau\\
&\leq \int_0^t \int_\Omega | \dot{v}_k(\tau,x) |^2\, dx d\tau + D^2 \int_0^t \int_\Omega \left(\sum_{i=1}^{m} |u|^{p_i}\right)^2 \, dx d\tau \\
&\leq \int_0^t \int_\Omega | \dot{v}_k(\tau,x) |^2\, dx d\tau + m D^2 \int_0^t \sum_{i=1}^{m} \|u\|^{2p_i}_{2p_i} \,d\tau  \\
&\leq \int_0^t \int_\Omega | \dot{v}_k(\tau,x) |^2\, dx d\tau + m D^2 \int_0^t  \sum_{i=1}^{m} C_{2p_i}^{2p_i} \|\nabla u\|^{2p_i}\,d\tau  \\
&\leq \int_0^t \int_\Omega | \dot{v}_k(\tau,x) |^2\, dx d\tau  + K_1 T,
\end{split}
\ee
where
$$
K_1=m D^2 \ds\sum_{j=1}^{m} C_{2p_i}^{2p_i} \ds\sup_{t\in[0,T]} \|\nabla u\|^{2p_i}.
$$
 Combining \eqref{31} and \eqref{32} we have
\begin{gather*}
\|\dot{v}_k(t)\|^2 +  \| \nabla {v}_k(t)\|^2 \leq 
\|u_1^k\|^2 +\| \nabla u_0^k \|^2 +  \int_0^t \int_\Omega | \dot{v}_k(\tau,x) |^2\, dx d\tau + K_1 T, \\
\leq (\|u_1^k\|^2 +\| \nabla u_0^k \|^2 + K_1 T) +
\int_0^t (\| \dot{v}_k(\tau) \|^2   + \|  \nabla v_k(\tau)\|^2) d\tau.
\end{gather*}

The Gr\"{o}nwall's inequality gives us
$$
\|\dot{v}^2_k(t)\|^2 +  \| \nabla {v}_k(t)\|^2 \leq (\|u_1^k\|^2 +\| \nabla u_0^k \|^2 +K_1 T) \ e^{T}.
$$
Hence,  the  sequence $\{v_k(t,x)\}$, $k=1, 2, \ldots$, is bounded in $\mathrm{L}^\infty([0,T]; \mathrm{H}_0^1(\Omega))$,  
$\{\dot{v}_k(t,x)\},$ $k=1, 2, \ldots$, is bounded in $\mathrm{L}^\infty([0,T]; \mathrm{L}^2(\Omega))$ and
$\{\ddot{v}_k(t,x)\},$ $k=1, 2, \ldots$, is bounded in $\mathrm{L}^\infty([0,T]; \mathrm{H}^{-1}(\Omega))$.
After the limit $k\to\infty$  in \eqref{25}, we get up to a subsequence if it is necessary, that $v_k(t,x)\to v(t,x)$, where $v(t,x)$ is a weak solution to \eqref{22}.

If problem \eqref{22} has two weak solutions $v_1(t,x)$ and $v_2(t,x)$, then function $w=v_1-v_2$ satisfies the problem
\be\label{35}
\begin{split}
&w_{tt}-\Delta w=0, \quad t\in(0,T],  x\in\Omega,\\
&w(0,x)=w_t(0,x)=0, \quad x\in \Omega,\\
&w(t,x)=0, \quad t\in[0,T], ~~x \in \partial\Omega.
\end{split}
\ee
Multiplying \eqref{35} by $w_t$ and integrating over $[0,t]\times \Omega$, we get
$$
\|w_t(t)\|^2 + \| \nabla w(t)\|^2=0,
$$
i.e. $w\equiv 0$ and $v_1(t,x)\equiv v_2(t,x)$ for $(t,x)\in[0,T]\times \Omega$.
Lemma~\ref{lm-th1}  is proved.
\end{proof}

\begin{thm}(Local existence)\label{th1n}
There exists $T>0$ such that problem \eqref{1}--\eqref{2}, \eqref{3.1}--\eqref{3.3}
 admits a unique local weak solution 
 in  $[0,T]$. Moreover, if $T_m= \sup\{T>0: \ u(t,x)~ \text{exists~ on~} [0,T]\times \Omega\}\leq \infty$,  then
\begin{itemize}
\item [(i)] for every $t\in[0,T_m)$ the solution  $u(t,x)$ satisfies the conservation law 

\be\label{consn}
E(0)=E(t),
\ee
where the energy functional $E(t)$ is defined by
\be \label{8n}
\begin{split}
E(t):=E(u(t),u_t(t) )=&\frac{1}{2}\left(
\|u_t(t)\|^2 +\|\nabla u(t)\|^2 \right)
- \int_{\Omega} \int_0^{u(t,x)} f(x,z) \, dz \, d x;
\end{split}
\ee

\item [(ii)]
\be\label{7n}
\text{if} \qquad \limsup_{t\to T_m, t<T_m} \|\nabla u(t)\| <\infty,
\qquad \text{then} \qquad T_m=\infty.
\ee
\end{itemize}
\end{thm}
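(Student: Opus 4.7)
The plan is to build the local weak solution by a Banach fixed point argument using Lemma~\ref{lm-th1} as the linear solver, then to extract the conservation law and the continuation criterion from standard Galerkin and energy estimates.

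First I would define the map $\mathcal{T}:\mathbb{H}\to \mathbb{H}$ by $\mathcal{T}(u)=v$, where $v$ is the unique weak solution of \eqref{22} produced by Lemma~\ref{lm-th1}. For $R^2 > 2(\|\nabla u_0\|^2+\|u_1\|^2)$ I would work in the closed subset
\[
B_R=\bigl\{u\in\mathbb{H}:\ u(0)=u_0,\ u_t(0)=u_1,\ \|u\|_{\mathbb{H}}\le R\bigr\}.
\]
Testing the linear problem against $v_t$ and estimating the source exactly as in \eqref{32} (where the assumption $p_r<\tfrac{n}{n-2}$ for $n\ge 3$ guarantees $\mathrm{H}_0^1\hookrightarrow \mathrm{L}^{2p_i}$ via \eqref{Sob}) yields
\[
\|\mathcal{T}(u)\|_{\mathbb{H}}^2\le \|\nabla u_0\|^2+\|u_1\|^2+C(R)\,T,
\]
so $\mathcal{T}(B_R)\subset B_R$ if $T$ is small enough.

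The main technical step is the contraction estimate. For $u,\tilde u\in B_R$, the difference $w=\mathcal{T}(u)-\mathcal{T}(\tilde u)$ solves a wave equation with zero data and source $f(x,u)-f(x,\tilde u)$. Because
\[
|f(x,u)-f(x,\tilde u)|\le C\sum_{i=1}^{m}\bigl(|u|^{p_i-1}+|\tilde u|^{p_i-1}\bigr)|u-\tilde u|,
\]
H\"older's inequality with exponents $\tfrac{n}{2}$ and $\tfrac{n}{n-2}$, combined with $\mathrm{H}_0^1\hookrightarrow \mathrm{L}^{2n/(n-2)}$ and the strict subcriticality $p_i<\tfrac{n}{n-2}$ (which yields $n(p_i-1)\le \tfrac{2n}{n-2}$), gives the Lipschitz bound
\[
\|f(\cdot,u)-f(\cdot,\tilde u)\|\le C(R)\,\|\nabla(u-\tilde u)\|.
\]
Testing the equation for $w$ against $w_t$ and applying Gr\"onwall's inequality then produces $\|w\|_{\mathbb{H}}\le C(R)\sqrt{T}\,\|u-\tilde u\|_{\mathbb{H}}$, which is a contraction for $T$ small. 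The Banach fixed point theorem supplies a unique local weak solution $u\in\mathbb{H}$ on $[0,T]$; the bound $p_r<\tfrac{n}{n-2}$ is precisely what makes $f(\cdot,u)\in \mathrm{L}^2(\Omega)$ so that $u_{tt}\in \mathrm{L}^2((0,T);\mathrm{L}^2)\subset \mathrm{C}^2((0,T_m);\mathrm{H}^{-1})$ as required.

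For the conservation law \eqref{consn} I would return to the Galerkin scheme in Lemma~\ref{lm-th1}: testing \eqref{25} against $\dot v_k$ yields the energy identity exactly for every $k$; passing to the limit via the strong convergence $v_k\to u$ in $\mathrm{C}([0,T];\mathrm{L}^{2p_i})$ (Aubin--Lions, again using $p_i<\tfrac{n}{n-2}$) identifies the nonlinear term and gives \eqref{consn}. Finally, for the continuation criterion \eqref{7n}, assume $T_m<\infty$ and $\sup_{t<T_m}\|\nabla u(t)\|=M<\infty$. Energy conservation \eqref{consn} together with the pointwise bound on $\int_0^{u}f(x,z)\,dz$ controlled by $\sum C_{p_i+1}^{p_i+1}M^{p_i+1}$ implies that $\|u_t(t)\|$ is also uniformly bounded on $[0,T_m)$. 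Thus for $t_0$ close enough to $T_m$, the data $(u(t_0),u_t(t_0))$ are uniformly bounded in $\mathrm{H}_0^1\times \mathrm{L}^2$, and the local existence time produced above from these data depends only on their norms and on $R$; this permits extending $u$ past $T_m$, contradicting maximality. The hardest step is the Lipschitz estimate for $f$ in $\mathrm{L}^2$, since it is the only place where the restriction $p_r<\tfrac{n}{n-2}$ is used in an essential way.
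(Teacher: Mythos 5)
Your proposal is correct and follows essentially the same route as the paper: a Banach fixed point argument on a ball in $\mathbb{H}$ using Lemma~\ref{lm-th1} as the linear solver, an energy estimate for invariance of the ball, a Lipschitz estimate on $f$ for the contraction, and the standard bootstrap for the continuation criterion (the paper obtains the conservation law by differentiating $E(t)$ directly rather than passing to the limit in the Galerkin scheme, and it carries out the H\"older/Sobolev bookkeeping separately for $n=1$ and $n\ge 2$ via exponents $\delta_i$, whereas your two-step $\mathrm{L}^2$ Lipschitz bound with exponents $\tfrac{n}{2},\tfrac{n}{n-2}$ covers only $n\ge 3$ as written). One small slip: $u_{tt}=\Delta u+f(x,u)$ lies only in $\mathrm{C}([0,T];\mathrm{H}^{-1})$, not in $\mathrm{L}^2((0,T);\mathrm{L}^2)$, since $\Delta u\in \mathrm{H}^{-1}$; the regularity claim still holds but for that reason.
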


\begin{proof}

Let $\mathbb{K}^2=2(\|u_1\|^2+\| \nabla u_0\|^2)$. 
We introduce for every $T>0$ the set
$$
\mathbb{R}=\{ u\in\mathbb{H}: \  u(0,x)=u_0(x),~ u_t(0,x)=u_1(x), \  \|u\|_{\mathbb{H}}\leq \mathbb{K} \}.
$$
For a given $u\in \mathbb{R}$ we define the map $\Phi: \mathbb{R} \to \mathbb{H}$ by the rule  $\Phi(u)=v$, where  $v$ is the unique solution to \eqref{22}, given in Lemma~\ref{lm-th1}, with right-hand side $f(x,u)$. We will prove that for a suitable $T>0$ the map $\Phi$ is a contraction map satisfying $\Phi(\mathbb{R})\subset \mathbb{R}$.

Multiplying \eqref{22} by $v_t$, after integration  over $[0,t]\times \Omega$,   we get the identity
\be\label{37}
\|v_t(t)\|^2 +\| \nabla v(t)\|^2=\|u_1\|^2 +\| \nabla u_0\|^2 +
2\int_0^t \int_\Omega f(x,u(\tau,x)) v_t(\tau,x) \, dx d\tau.
\ee
Repeating the proof of the estimate \eqref{32} we obtain the inequality
\be\label{38}
\begin{split}
2\int_0^t \int_\Omega f(x,u(\tau,x)) v_t(\tau,x) \, dx d\tau &\leq
 \int_0^t \|v_t(\tau)\|^2 \, d\tau +  K_1 T\\
&\leq \int_0^t (\|v_t(\tau)\|^2 + \|\nabla v(\tau)\|^2) \, d\tau +  K_2 T
\end{split}
\ee
for every $t\in(0,T]$, where
$$
 K_2=m D^2 \sum_{i=1}^{m} C_{2p_i}^{2p_i} \mathbb{K}^{2p_i} \geq K_1.
$$

 From \eqref{37} and \eqref{38} it follows that
$$
\|v_t(t)\|^2 +\| \nabla v(t)\|^2\leq \frac{1}{2} \mathbb{K}^2 + K_2 T +
\int_0^t (\|v_t(\tau)\|^2 + \|\nabla v(\tau)\|^2) \, d\tau 
$$
and the Gr\"{o}nwall inequality gives us 
$$
\|v_t(t)\|^2 +\| \nabla v(t)\|^2\leq \left(\frac{1}{2} \mathbb{K}^2 +   K_2 T\right) e^T.
$$
Taking the maximum for $t\in[0,T]$ we have 
$$
\|v\|_{\mathbb{H}}^2\leq \left(\frac{1}{2} \mathbb{K}^2 + K_2 T\right) e^T \leq \mathbb{K}^2
$$
for  sufficiently small $T$, i.e. $\Phi(\mathbb{R})\subset \mathbb{R}$.

Now  if $z_1$, $z_2 \in \mathbb{R}$, $v_i=\Phi(z_i)\in\mathbb{R}$, $i=1,2$, then the function $v=v_1-v_2$ satisfies the problem
\be\label{41}
\begin{split}
&v_{tt}-\Delta v=f(x,z_1) - f(x,z_2), \ \ t\in(0,T], \ x\in\Omega,\\
&v(0,x)=v_t(0,x)=0~~~~~ x\in \Omega,\\
&v(t,x)=0, ~~~~~ t\in[0,T], ~~x \in \partial\Omega.
\end{split}
\ee
Multiplying \eqref{41} by $v_t$ and integrating over $[0,t]\times\Omega$, we obtain the following chain of inequalities
\be\label{42}
\begin{split}
&\| v_t(t)\|^2 + \|\nabla v(t)\|^2  \leq 2\int_0^t \int_\Omega (f(x,z_1)-f(x,z_2)) v_t(\tau,x) \, dx d\tau \\
&\leq 2\int_0^t \int_\Omega \sum_{i=1}^m \left( p_i |d_i(x)|  \int_0^1|(1-s)z_1+s z_2|^{p_i-1}\, ds\right) |z_1-z_2|\, |v_t(\tau,x)| \, dx d\tau \\
& \leq 2 p D \int_0^t \int_\Omega \sum_{i=1}^m (|z_1|+|z_2|)^{p_i-1} \ |z_1-z_2| \  |v_t(\tau,x)| \ dx d\tau,  \ p=\max_{i\in [1,m]}\{p_i\}. \\
\end{split}
\ee
For $n\geq 2$,
applying the generalized H\"{o}lder's inequality, we get from \eqref{42}
\be\label{42n}
\begin{split}
&\| v_t(t)\|^2 + \|\nabla v(t)\|^2  
\leq 2 p D \int_0^t \sum_{i=1}^m  \left\|(|z_1|+|z_2|)^{p_i-1} \right \|_{\frac{2n}{1-\delta_i}} \ \|z_1-z_2\|_{\frac{2n}{n-1+\delta_i}} \ \|v_t(\tau)\| \ d\tau,
\end{split}
\ee
where
$$\delta_i=\begin{cases}
\max\{1-4(p_i-1),0\} &\text{if} \ n=2; \\
 1-(p_i-1)(n-2) &\text{if} \ n\geq 3. \\
\end{cases}
$$ 
Note that $\delta_i\in (-1,1)$ under conditions  \eqref{3.2}.

From the trivial inequality $(a+b)^p\leq 2^p (a^p +b^p)$ for $p>0$, $a\geq 0$, $b\geq 0$, we have 
\begin{align*}
 &\left(\int_\Omega\left(|z_1|+|z_2|\right)^{\frac{2n(p_i-1)}{1-\delta_i}} \ dx\right)^{\frac{1-\delta_i}{2n}} \leq 
2^{p_i-1} \left(\int_\Omega \left(|z_1|^{\frac{2n(p_i-1)}{1-\delta_i}}+|z_2|^{\frac{2n(p_i-1)}{1-\delta_i}} \right)\ dx \right)^{\frac{1-\delta_i}{2n}}\\
&
 \leq 2^{(p_i-1)+\frac{(1-\delta_i)}{2n}} \left\{
\left(\int_\Omega |z_1|^{\frac{2n(p_i-1)}{1-\delta_i}} \, dx \right)^{\frac{1-\delta_i}{2n}}\ +\left(\int_\Omega |z_2|^{\frac{2n(p_i-1)}{1-\delta_i}} \, dx \right)^{\frac{1-\delta_i}{2n}} 
\right\}\\
&\leq 2^{p_i-1+\frac{1}{2n}} \left( \|z_1\|_{\frac{2n(p_i-1)}{1-\delta_i}}^{p_i-1} +\|z_2\|_{\frac{2n(p_i-1)}{1-\delta_i}}^{p_i-1}\right)\leq 2^{p_i} C_{\frac{2n(p_i-1)}{1-\delta_i}}^{p_i-1}
\left( \|\nabla z_1\|^{p_i-1} +\|\nabla z_2\|^{p_i-1}\right).
\end{align*}
In the above estimates the Sobolev inequalities \eqref{Sob} are  applicable  since
$$
1\leq \frac{2n(p_i-1)}{1-\delta_i} \quad \text {if} \ n=2; \qquad 1\leq \frac{2n(p_i-1)}{1-\delta_i}=\frac{2n}{n-2} \quad \text {if} \ n\geq 3.
$$

From $1<\frac{2n}{n}<\frac{2n}{n-1-\delta_i}<\frac{2n}{n-2}$, applying  \eqref{Sob} for  $\|z_1-z_2\|_{\frac{2n}{n-1+\delta_i}}$,  inequality \eqref{42n} becomes
\be\label{42nn}
\begin{split}
&\| v_t(t)\|^2 + \|\nabla v(t)\|^2 \\
&\leq p D\sum_{i=1}^m 2^{p_i+1} \, C_{{\frac{2n(p_i-1)}{1-\delta_i}}}^{p_i-1} \, C_{\frac{2n}{n-1-\delta_i}} 
\int_0^t  \left( \|\nabla z_1\|^{p_i-1} +\|\nabla z_2\|^{p_i-1}\right)
 \ \| \nabla(z_1-z_2)\| \ \|v_t(\tau)\| \ d\tau \\
&\leq
\int_0^t \|v_t(\tau)\|^2 \, d\tau +p^2 D^2 \sum_{i=1}^m 2^{2p_i+1}  \, C_{{\frac{2n(p_i-1)}{1-\delta_i}}}^{2(p_i-1)} \, C_{\frac{2n}{n-1-\delta_i}}^2
\int_0^t  \left( \|z_1\|_{\mathbb{H}}^{p_i-1} +\|z_2\|_{\mathbb{H}}^{p_i-1}\right)^2
 \ \| z_1-z_2\|_{\mathbb{H}}^2 \ d\tau \\
&\leq
\int_0^t (\|v_t(\tau)\|^2 +\| \nabla v(\tau)\|^2) \, d\tau +2^{2p+2} p^2 D^2  \, \sum_{i=1}^m C_{{\frac{2n(p_i-1)}{1-\delta_i}}}^{2(p_i-1)} \, C_{\frac{2n}{n-1-\delta_i}}^2 \mathbb{K}^{2(p_i-1)}
\ \| z_1-z_2\|_{\mathbb{H}}^2 \, T. \\
\end{split}
\ee

For $n=1$, from \eqref{42} and \eqref{Sob}, we get
\be\label{421}
\begin{split}
&\| v_t(t)\|^2 + \|\nabla v(t)\|^2  \leq 
2 p D \int_0^t  \int_{\Omega} \sum_{i=1}^m \left(\|z_1\|_{\infty}+\|z_2\|_{\infty}\right)^{p_i-1} \ |z_1-z_2| \  |v_t(\tau,x)| \ dx d\tau \\ 
&\leq 2 p D  \int_0^t \int_{\Omega}\sum_{i=1}^m 2^{p_j} C_{\infty}^{p_i-1}  \mathbb{K}^{p_i-1} \ |z_1-z_2| \  |v_t(\tau,x)| \ dx d\tau \\ 
& \leq 2^{p+1}  p D \sum_{i=1}^m C_{\infty}^{p_i-1}\mathbb{K}^{p_i-1} \int_0^t \|z_1-z_2\|_{\infty} \|v_t(\tau)\| \, d\tau \\
&\leq
\int_0^t \|v_t(\tau)\|^2 \, d\tau + 2^{2p+1} p^2  D^2\sum_{i=1}^m C_{\infty}^{2p_i-2}  \mathbb{K}^{2(p_i-1)} C_{\infty}^2\|\nabla (z_1 - z_2)\|^2  T \\
&
\leq 
\int_0^t (\|v_t(\tau)\|^2 +\| \nabla v(\tau)\|^2) \, d\tau +  2^{2p+1}  p^2 D^2 \sum_{i=1}^m C_{\infty}^{2p_j} \mathbb{K}^{2(p_i-1)} \|z_1 - z_2\|^2_{\mathbb{H}}\, T.
\end{split}
\ee
Estimates \eqref{42nn} (case $n\geq 2$) and \eqref{421} (case $n=1$) can be combined as
$$
\| v_t(t)\|^2 + \|\nabla v(t)\|^2  \leq 
\int_0^t (\|v_t(\tau)\|^2 +\| \nabla v(\tau)\|^2) \, d\tau + K_3  \| z_1-z_2\|_{\mathbb{H}}^2 \, T,
$$
where
$$
K_3=\begin{cases}
2^{2p+1}  p^2 D^2 \ds\sum_{i=1}^m C_{\infty}^{2p_i} \mathbb{K}^{2(p_i-1)} & \text{for}~~n=1;\\[15pt]
2^{2p+2} p^2 D^2 \ds \sum_{i=1}^m C_{{\frac{2n(p_i-1)}{1-\delta_i}}}^{2(p_i-1)} \, C_{\frac{2n}{n-1-\delta_i}}^2 \mathbb{K}^{2(p_i-1)} & \text{for}~~n\geq2.
\end{cases}
$$
Using Gr\"{o}nwall's inequality we get
$$
\|v_1-v_2\|^2_{\mathbb{H}}=\max_{t\in[0,T]}\| v_t(t)\|^2 + \|\nabla v(t)\|^2 \leq
K_3 T e^T \|z_1-z_2\|^2_{\mathbb{H}}, 
$$
i.e.
$$
\|\Phi(z_1) -\Phi(z_2)\|^2_{\mathbb{H}}
\leq \xi \|z_1-z_2\|^2_{\mathbb{H}},
$$
where
$$
\xi=K_3 T e^T <1~~~ \text{for sufficiently small} \ T.
$$
Hence, for sufficiently small T, the map $\Phi$ is a contraction and from the contraction mapping principle, there exists a unique weak solution to \eqref{1}--\eqref{2}, \eqref{3.1}--\eqref{3.3} for $t\in[0,T]$.

Let $[0,T_m)$ be the maximal  existence time interval of the weak solution to \eqref{1}--\eqref{2}, \eqref{3.1}--\eqref{3.3}.
Differentiating \eqref{8n} with respect to $t$ we get from \eqref{1} the identity
$$
E'(t)=\int_\Omega u_t(u_{tt}-\Delta u - f(x,u)) \, dx=0~~~~\text{for}~~t\in[0,T_m),
$$
which proves the conversation law of the energy \eqref{consn}.

If \eqref{7n} holds, then from the embedding theorem  \eqref{Sob} it follows that
$$
\limsup_{t\to T_m, t<T_m} \|u(t)\|_p <\infty ~~~\text{for}~~p~~\text{satisfying \eqref{3.2}}
$$
and from conservation law  \eqref{8n} we come to
$$
 \limsup_{t\to T_m, t<T_m} \| u_t(t)\| <\infty.
$$ 
Applying  the same arguments  as in the proof above, we conclude that for some $\delta >0$ problem \eqref{1}--\eqref{2}, \eqref{3.1}--\eqref{3.3} has a weak solution in the interval $[0,T_m+\delta)$. This contradicts that $T_m$ is  the maximal existence time interval.
The proof of Theorem~\ref{th1n} is completed. 
\end{proof}

\end{document}